\newtheoremstyle{theorem}
  {12pt}          
  {12pt}  
  {\sl}  
  {\parindent}     
  {\bf}  
  {. }    
  { }    
  {}     
\theoremstyle{theorem}
\newtheorem{theorem}{Theorem}[section]
\newtheorem{corollary}[theorem]{Corollary}
\newtheorem{remark}[theorem]{Remark}
\newtheorem{proposition}[theorem]{Proposition}
\newtheorem{lemma}[theorem]{Lemma}
\newtheorem{definition}[theorem]{Definition}
\newtheorem{notations}[theorem]{Notations}
\newcommand{\ic}{\ensuremath{\mathcal{I}}}
\newcommand{\oc}{\ensuremath{\mathcal{O}}}
\newcommand{\lc}{\ensuremath{\mathcal{L}}}
\newcommand{\mc}{\ensuremath{\mathcal{M}}}
\newcommand{\Pt}{\mathbb{P}^3}
\newcommand{\Ptw}{\mathbb{P}^2}
\newcommand{\Pcq}{\mathbb{P}^5}
\newcommand{\Pn}{\mathbb{P}^n}
\newcommand{\bN}{\mathbb{N}}
\newcommand{\bC}{\mathbb{C}}
\newcommand{\bP}{\mathbb{P}}
\newcommand{\aG}{\alpha}
\newcommand{\fG}{\varphi}
\newcommand{\sG}{\sigma}
\newcommand{\lag}{\langle}
\newcommand{\rag}{\rangle}
\newcommand{\bds}{\begin{displaystyle}}
\newcommand{\eds}{\end{displaystyle}}
\title[Chern classes.]{Chern classes of rank two globally generated vector bundles on $\Ptw$.}
\author{Ph. Ellia}
\address{Dipartimento di Matematica, 35 via Machiavelli, 44100 Ferrara}
\email{phe@unife.it}
\subjclass[2010] {14F99, 14J99} \keywords{Rank two vector bundles, globally generated, projective plane.}
\begin{document}
\maketitle

\thispagestyle{empty}

\begin{abstract} We determine the Chern classes of globally generated rank two vector bundles on $\Ptw$.
\end{abstract}

\section*{Introduction.}

\par
Vector bundles generated by global sections come up in a variety of problems in projective algebraic geometry. In this paper we consider the following question: \emph{which are the possible Chern classes of rank two globally generated vector bundles on $\Ptw$?} (Here $\Ptw =\Ptw _k$ with $k$ algebraically closed, of charateristic zero.) 
\par
Clearly these Chern classes have to be positive. Naively one may think that this is the only restriction. A closer inspection shows that this is not true: since we are on $\Ptw$, the construction of rank two vector bundles starting from codimension two, locally complete intersection subschemes is subject to the Cayley-Bacharach condition (see Section \ref{S:CB}). So if we have an exact sequence $0 \to \oc \to F \to \ic _Y(c) \to 0$, with $F$ a rank two vector bundle and $Y \subset \Ptw$ of codimension two, then $Y$ satisfies Cayley-Bacharach for $c-3$.
\par
Now $F$ is globally generated if and only if $\ic _Y(c)$ is. If $Y$ is contained in a \emph{smooth} curve, $T$, of degree $d$, we have $0 \to \oc (-d+c) \to \ic _Y(c) \to \ic _{Y,T}(c) \to 0$ and we see that, if $c \geq d$, $\ic _Y(c)$ is globally generated if and only if the line bundle $\lc = \ic _{Y,T}(c)$ on $T$ is globally generated. But there are gaps in the degrees of globally generated line bundles on a smooth plane curve of degree $d$ (it is classically known that no such bundle exists if $d \geq 3$ and $1 \leq \deg \lc \leq d-2$). A remarkable theorem due to Greco-Raciti and Coppens (\cite{GR}, \cite{Co} and Section \ref{S: statement}) gives the exact list of gaps.
 \par
This is another obstruction, at least if $Y$ lies on a \emph{smooth} curve, $T$, of low degree with respect with $c=c_1(F)$ (in this case $F$ tends to be not stable). The problem then is to have such a curve for \underline{every} vector bundle with fixed Chern classes and then, to treat the case where $T$ is not smooth. The first problem is solved in the \emph{necessarily unstable} range ($\Delta (F)=c_1^2-4c_2 > 0$) (see Section \ref{S: statement}). In the stable range there are no obstructions, this was already known to Le Potier (see \cite{LePotier}). For the second problem we use the following remark: if a line bundle $\oc _T(Z)$ on a smooth plane curve of degree $d$ is globally generated, then $Z$ satisfies Cayley-Bacharach for $d-3$. Working with the minimal section of $F$ we are able to have a similar statement even if $T$ is singular (see \ref{Z CBt-4}). Finally with a slight modification of Theorem 3.1 in \cite{GR} we are able to show the existence of gaps. 
\par
To state our result we need some notations. Let $c>0$ be an integer. Let's say that $(c,y)$ is effective if there exists a globally generated rank two vector bundle on $\Ptw$, $F$, with $c_1(F)=c, c_2(F)=y$. It is easy to see (cf Section \ref{S: gen}) that it must be $0 \leq y \leq c^2$ and that $(c,y)$ is effective if and only if $(c, c^2-y)$ is. So we may assume $y \leq c^2/2$. For every integer $t$, $2 \leq t \leq c/2$, let $G_t(0)=[c(t-1)+1,\,\,t(c-t)-1]$ (we use the convention that if $b<a$, then $[a,b]=\emptyset$). For every integer $t$, $4 \leq t \leq c/2$, denote by $t_0$ the integral part of $\sqrt{t-3}$, then for every integer $a$ such that $1 \leq a \leq t_0$ define $G_t(a) =[(t-1)(c-a)+a^2+1\, ,(t-1)(c-a+1)-1]$. Finally let 
$$G_t= \bigcup _{a=0}^{t_0}G_t(a)\,\,\, and\,\,\, G = \bigcup _{t=2}^{c/2}G_t\,\,.$$ Then we have:   

\begin{theorem}
\label{Thm}
Let $c > 0$ be an integer. There exists a globally generated rank two vector bundle on $\Ptw$ with Chern classes $c_1=c,\, c_2=y$ if and only if
one of the following occurs:
\begin{enumerate}
\item $y=0$ or $c-1 \leq y < c^2/4$ and $y \notin G$
\item $c^2/4 \leq y \leq 3c^2/4$
\item $3c^2/4 < y \leq c^2-c+1$ and $c^2-y \notin G$ or $y=c^2$.
\end{enumerate} 
\end{theorem}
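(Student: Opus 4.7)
The plan is to apply the Serre construction systematically: a globally generated rank two bundle $F$ with $c_1(F)=c$ and a non-zero section vanishing on $Y \subset \Ptw$ of length $y=c_2(F)$ gives an exact sequence $0 \to \oc \to F \to \ic_Y(c) \to 0$, and conversely any $Y$ of length $y$ satisfying Cayley--Bacharach for $c-3$ comes from such an $F$; moreover $F$ is globally generated if and only if $\ic_Y(c)$ is. The question therefore reduces to: for which pairs $(c,y)$ does there exist a length-$y$ subscheme $Y$ satisfying Cayley--Bacharach for $c-3$ with $\ic_Y(c)$ globally generated? First I would install the duality $F \leftrightsquigarrow F'$ (transposing the evaluation sequence $\oc^{N} \to F \to 0$) which swaps $y$ and $c^2-y$, reducing everything to $0 \le y \le c^2/2$ together with the extreme $y=c^2$. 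The decomposable bundles $\oc \oplus \oc(c)$ and their duals dispose of $y=0$ and $y=c^2$.

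In the \emph{stable range} $c^2/4 \le y \le c^2/2$ there is nothing to forbid: existence here is the content of Le Potier's construction cited in the introduction, obtained by taking a sufficiently general $Y$ of length $y$ satisfying Cayley--Bacharach for $c-3$ and lying on no curve of low degree, and checking that $\ic_Y(c)$ is globally generated. This settles item (2).

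The heart of the argument is the \emph{necessarily unstable} range $c-1 \le y < c^2/4$. Here any globally generated $F$ carries a minimal destabilising subsheaf $\oc(-t) \hookrightarrow F$ for some integer $2 \le t \le c/2$; equivalently $Y$ lies on a plane curve $T$ of degree $d=c-t$. If $T$ is \emph{smooth}, the residual sequence $0 \to \oc(c-d) \to \ic_Y(c) \to \ic_{Y,T}(c) \to 0$ shows that global generation of $\ic_Y(c)$ forces global generation of the line bundle $\lc = \ic_{Y,T}(c)$ on $T$, of degree $cd-y$. The Greco--Raciti--Coppens list of gaps among degrees of globally generated line bundles on a smooth plane curve of degree $d$ then translates, after a combinatorial matching in which the parameter $a$ counts the points one needs to subtract to reach a collinear configuration, into exactly the forbidden intervals $G_t(a)$. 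For the converse I would, for each admissible $y \notin G$, pick a smooth $T$ of the correct degree $c-t$, build on $T$ a globally generated line bundle of degree $cd-y$ (possible precisely because $cd-y$ avoids the Greco--Raciti--Coppens gaps), and run Serre in reverse, verifying the Cayley--Bacharach for $c-3$ condition on the resulting $Y$. Combined with the duality of the first paragraph, this yields items (1) and (3).

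The main obstacle is the case in which the minimal curve $T$ is \emph{singular}: the translation to line bundles on $T$ is no longer available, and one cannot quote Greco--Raciti--Coppens directly. My plan is to replace that translation by the Cayley--Bacharach-type property \ref{Z CBt-4} announced in the introduction for the zero scheme of a minimal section, namely that $Y$ satisfies Cayley--Bacharach for $d-3$ relative to $T$ even when $T$ is singular, and then to use the slight modification of Theorem~3.1 of \cite{GR} also signalled in the introduction to extract the same gaps from this weaker input. This is the step I expect to require the most careful bookkeeping, since one has to control how the minimal character of the section interacts with the singularities of $T$ to reproduce the collinearity-type extremal configurations underlying Greco--Raciti. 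Once this is in hand the smooth and singular subcases merge and produce the same numerical list $G_t(a)$, completing the classification.
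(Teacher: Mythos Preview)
Your global plan---duality, Le Potier in the stable range, Greco--Raciti--Coppens in the unstable range, and a Cayley--Bacharach substitute when $T$ is singular---is exactly the paper's. But two concrete points are wrong and, taken literally, make the unstable argument fail. First, the degree of $T$: a destabilising sub-line-bundle of $F$ must have degree $>c/2$, so $\oc(-t)\hookrightarrow F$ with $t\ge 2$ is never destabilising. What one actually has for $y\in A_t$ is $h^0(F(-c+t-1))\ne 0$, i.e.\ $\oc(c-t+1)\hookrightarrow F$, and hence $Y$ lies on a curve $T$ of degree $t-1$, not $c-t$. This is not cosmetic: with your $d=c-t\ge c/2$ one gets $\deg\lc=c(c-t)-y>(c-t)^2=d^2$, far above the largest gap of $LS(d)$ (which is below $d^{3/2}$), so your $\lc$ is always globally generated and no gaps are detected. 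The intervals $G_t(a)$ are calibrated to $d=t-1$ and $\deg\lc=c(t-1)-y$; your existence construction inherits the same mismatch.

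Second, in the singular-$T$ step it is \emph{not} $Y$ that satisfies $CB(d-3)$. The paper takes the unique section of $F(-c+t-1)$, with zero scheme $W$; since $\ic_W(t-1)$ is globally generated, $W$ is linked by a $(t-1,t-1)$ complete intersection to a \emph{smooth} $Z$ of degree $c(t-1)-y$, and this $Z$---linked in turn to $Y$ by a $(t-1,c)$ complete intersection with $\ic_Y(c)$ globally generated and $Y\cap Z=\emptyset$---is shown to satisfy $CB(t-4)$. The modified Greco--Raciti statement (Proposition~\ref{Exist Gaps}) is then applied to $Z$. Applying it to $Y$ is hopeless: $\deg Y=y$ lies far above the range $[(a-1)(t-1)+1,\,a(t-1-a)-1]$ where that proposition has content. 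So the ``careful bookkeeping'' you anticipate is really a linkage construction passing from $Y$ through $W$ to $Z$; without it the singular case does not get off the ground.
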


\par
Although quite awful to state, this result is quite natural (see Section \ref{S: statement}). As a by-product we get (Section \ref{S: G}) all the possible ``bi-degrees'' for generically injective morphisms from $\Ptw$ to the Grassmannian $G(1,3)$ (or more generally to a Grassmannian of lines). To conclude let's mention that some partial results on this problem can be found in \cite{ELN}. 
\par
\emph{Acknowledgment:} I thank L. Gruson for drawing this problem to my attention and E. Mezzetti for pointing out a misprint in the statement of the theorem.

\section{General facts and a result of Le Potier for stable bundles.} \label{S: gen}

Let $F$ be a rank two globally generated vector bundle on $\Ptw$ with Chern classes $c_1(F)=:c$, $c_2(F)=:y$. Since the restriction $F_L$ to a line is globally generated, we get $c \geq 0$. A general section of $F$ yields:
$$0 \to \oc \to F \to \ic _Y(c) \to 0$$
where $Y \subset \Ptw$ is a smooth set of $y$ distinct points (cf \cite{Ha}, 1.4) or is empty. In the first case $y > 0$, in the second case $F \simeq \oc \oplus \oc (c)$ and $y=0$. In any case the Chern classes of a globally generated rank two vector bundle are positive.
\par
Also observe ($Y \neq \emptyset$) that $\ic _Y(c)$ is globally generated (in fact $F$ globally generated $\Leftrightarrow \ic _Y(c)$ is globally generated). This implies by Bertini's theorem that a general curve of degree $c$ containing $Y$ is smooth (hence irreducible).
\par
Since $rk(F)+\dim (\Ptw )=4$, $F$ can be generated by $V \subset H^0(F)$ with $\dim V =4$ and we get:
$$0 \to E^* \to V \otimes \oc \to F \to 0$$
It follows that $E$ is a rank two, globally generated vector bundle with Chern classes: $c_1(E)=c$, $c_2(E)=c^2-y$. We will say that $E$ is the \emph{G-dual bundle} of $F$. Since a globally generated rank two bundle has positive Chern classes we get: $0 \leq y \leq c^2$, $c \geq 0$.

\begin{definition}
We will say that $(c,y)$ is \emph{effective} if there exist a globally generated rank two vector bundle on $\Ptw$ with $c_1=c$ and $c_2=y$. A non effective $(c,y)$ will also be called a \emph{gap}.
\end{definition}

\begin{remark}
\label{rmk c^2/2}
By considering G-dual bundles we see that $(c,y)$ is effective if and only if $(c,c^2-y)$ is effective. Hence it is enough to consider the range $0 \leq y \leq c^2/2$.
\par
If $c=0$, then $F \simeq 2.\oc$ and $y=0$.
\par
If $y = c^2$ then $c_2(E)=0$, hence $E \simeq \oc \oplus \oc (c)$ and:
$$0 \to \oc (-c) \to 3.\oc \to F \to 0$$
Such bundles exists for any $c \geq 0$. If $y=0$, $F \simeq \oc \oplus \oc (c)$.
\end{remark}

\begin{definition}
\label{stableBdles}
If $F$ is a rank two vector bundles on $\Ptw$ we denote by $F_{norm}$ the unique twist of $F$ such that $-1 \leq c_1(F_{norm}) \leq 0$. The bundle $F$ is \emph{stable} if $h^0(F_{norm})=0$.
\end{definition}

\par
By a result of Schwarzenberger, if $F$ is stable with $c_1(F)=c, c_2(F)=y$, then $\Delta (F) := c^2-4y < 0$ (and $\Delta (F) \neq -4$). Moreover there exist a stable rank two vector bundle with Chern classes $(c,y)$ if and only if $\Delta := c^2 -4y < 0$, $\Delta \neq -4$.
\par 
Concerning stable bundles we have the following result of Le Potier \cite{LePotier}:

\begin{proposition}[Le Potier]
\label{Le Potier}
Let $\mc (c_1,c_2)$ denote the moduli space of stable rank two bundles with Chern classes $c_1,c_2$ on $\Ptw$. There exists a non empty open subset of $\mc (c_1,c_2)$ corresponding to globally generated bundles if and only if one of the following holds:\\
(1) $c_1 > 0$ and $\chi (c_1,c_2) \geq 4$ ($\chi (c_1,c_2)= 2 +\frac{c_1(c_1+3)}{2}-c_2$)\\
(2) $(c_1,c_2)=(1,1)$ or $(2,4)$.
\end{proposition}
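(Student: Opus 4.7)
The plan is to prove the equivalence in two directions: necessity by cohomological and dimensional arguments, sufficiency by Serre's construction combined with the irreducibility of the moduli space.

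For necessity, suppose $F \in \mc(c_1,c_2)$ is globally generated. Any nonzero section $\oc \hookrightarrow F$ saturates to a line subbundle $\oc(d) \hookrightarrow F$ with $d \geq 0$; stability demands $d < c_1/2$, so $c_1 > 0$. By Serre duality and stability, $h^2(F) = h^0(F^{\vee}(-3)) = 0$, and Riemann--Roch gives $h^0(F) \geq \chi(F)$. Moreover, global generation of a rank $2$ bundle with $c_1 > 0$ forces $h^0(F) \geq 3$, since $h^0 = 2$ together with global generation would give $F \simeq 2\cdot\oc$ and $c_1 = 0$. For $F$ generic in $\mc(c_1,c_2)$, $h^0$ attains its minimal (generic) value $\chi(F)$ whenever $\chi \geq 0$. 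If that minimum is $\geq 4$ we obtain condition (1). If it equals $3$, surjective evaluation gives a resolution $0 \to \oc(-c_1) \to 3\cdot\oc \to F \to 0$, so $c_2 = c_1^2$; comparing $\dim \mc(c_1,c_1^2) = 3c_1^2 - 3$ to the dimension $3\binom{c_1+2}{2} - 9$ of the locus of bundles admitting such a resolution, equality holds only for $c_1 \in \{1,2\}$, yielding the exceptional cases $(1,1)$ and $(2,4)$.

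For sufficiency, case (2) is handled by explicit examples: $T_{\Ptw}(-1)$ from the Euler sequence for $(1,1)$, and for $(2,4)$ the cokernel $F$ of a map $\oc(-2) \to 3\cdot\oc$ given by three generic quadrics with no common zero; both are stable and globally generated by construction. For case (1), with $c_1 > 0$ and $c_2 \leq c_1(c_1+3)/2 - 2$, the strategy is to build one stable globally generated $F$ and appeal to openness of the globally generated locus (by semicontinuity of the cokernel of evaluation) together with irreducibility of $\mc(c_1,c_2)$. Choose a general set $Y \subset \Ptw$ of $c_2$ distinct points. The bound gives $h^0(\ic_Y(c_1)) \geq 3$, and for $Y$ general the sheaf $\ic_Y(c_1)$ is globally generated. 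Once one verifies the Cayley--Bacharach condition for $c_1 - 3$ on $Y$, Serre's construction produces $0 \to \oc \to F \to \ic_Y(c_1) \to 0$ with $F$ a rank $2$ bundle of Chern classes $(c_1,c_2)$, globally generated since $\ic_Y(c_1)$ is. Stability follows from the fact that a general $Y$ of $c_2$ points is not contained in any plane curve of degree $< c_1/2$, so the minimal section of $F$ has non-positive twist.

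The principal obstacle is establishing global generation of $\ic_Y(c_1)$ and Cayley--Bacharach for $c_1 - 3$ simultaneously on a general $Y$ throughout the whole range $c_2 \leq c_1(c_1+3)/2 - 2$. Both conditions are easy when $c_2$ is large enough that $Y$ imposes independent conditions on curves of degree $c_1 - 3$ (roughly $c_2 \geq \binom{c_1-1}{2}+1$), but in the complementary small-$c_2$ range they require finer position arguments, possibly via specialization of $Y$ onto a fixed low-degree curve and a direct check of CB on that curve. A secondary subtlety is the dimension coincidence singling out $(1,1)$ and $(2,4)$: one must verify that the three-section resolution locus fills out the whole moduli for $c_1 = 1, 2$ but only a proper closed subvariety for $c_1 \geq 3$, so that for $c_1 \geq 3$ no open of moduli consists of such bundles.
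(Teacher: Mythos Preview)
The paper does not prove this proposition at all: it is quoted from Le Potier \cite{LePotier} and used as a black box in the subsequent corollary. So there is no ``paper's own proof'' to compare your attempt against.

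That said, your outline has real gaps beyond the ones you flag. On the necessity side, the crux is your assertion that the generic $F\in\mc(c_1,c_2)$ has $h^0(F)=\max(\chi,0)$; this is itself a nontrivial theorem (essentially Le Potier's), and without it the argument does not get off the ground. A minor point: once you know the generic bundle has $h^0=\chi=3$ and is globally generated, the resolution $0\to\oc(-c_1)\to 3\cdot\oc\to F\to 0$ forces $c_2=c_1^2$, and plugging back into $\chi(c_1,c_1^2)=2+(3c_1-c_1^2)/2=3$ already gives $c_1\in\{1,2\}$ directly; the dimension comparison you propose is superfluous.

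On the sufficiency side the obstacle you name is not a technicality but a genuine failure of the method: a \emph{general} set $Y$ of $c_2$ points does \emph{not} satisfy $CB(c_1-3)$ whenever $c_2\le\binom{c_1-1}{2}$, since for general $Y$ one can always pass a degree $c_1-3$ curve through any $c_2-1$ of the points avoiding the last. As $\binom{c_1-1}{2}>c_1^2/4$ for $c_1\ge 6$, this kills the Serre construction with general $Y$ precisely in the low-$c_2$ portion of the stable range. The specialization idea you mention is in the right spirit, but carrying it out---finding $Y$ that is simultaneously in special enough position for $CB(c_1-3)$ yet general enough for $\ic_Y(c_1)$ to be globally generated and for $F$ to be stable---is the actual work, and is what Le Potier's original argument does.
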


Using this proposition we get:

\begin{corollary}
\label{CorLP}
If $c >0$ and
$$\frac{c^2}{4} \leq y \leq \frac{3c^2}{4}$$
then $(c,y)$ is effective.
\end{corollary}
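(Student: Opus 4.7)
The plan is to reduce via G-duality (Remark \ref{rmk c^2/2}) to the range $c^2/4\leq y\leq c^2/2$; since this interval contains no integer for $c=1$, I may assume $c\geq 2$. I then split according to the discriminant $\Delta := c^2-4y\leq 0$ into three regimes: $\Delta<0$ with $\Delta\neq -4$, $\Delta=0$, and $\Delta=-4$.

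In the first regime, Schwarzenberger's criterion guarantees that $\mc (c,y)$ is non-empty, and Le Potier's Proposition \ref{Le Potier} applies once $\chi (c,y)\geq 4$ is verified; this is automatic here since
$$\chi (c,y)=2+\frac{c(c+3)}{2}-y\geq 2+\frac{3c}{2}\geq 5$$
because $y\leq c^2/2$ and $c\geq 2$. For $\Delta=0$, writing $c=2k$ gives $y=k^2$ and the split bundle $\oc (k)\oplus \oc (k)$ does the job.

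The genuine obstacle is the remaining regime $\Delta=-4$, in which $c=2k$ is even and $y=k^2+1$: no stable bundle with these Chern classes exists, so Le Potier is silent, and no direct sum $\oc (a)\oplus \oc (b)$ with $a+b=2k$ realises the discriminant $-4$. My plan is to build the base case $k=1$ by hand and then twist up. For $(c,y)=(2,2)$, two distinct points $Y\subset \Ptw$ satisfy the Cayley--Bacharach condition for $c-3=-1$ vacuously, so the Serre correspondence yields a rank two vector bundle $F_{1}$ sitting in
$$0\to \oc \to F_{1}\to \ic _Y(2)\to 0;$$
the four-dimensional linear system of conics through $Y$ is basepoint free, which forces the global generation of $\ic _Y(2)$ and hence of $F_{1}$. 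For $k\geq 1$, setting $F:=F_{1}(k-1)$ and using the rank two twist formulas, one finds $c_1(F)=2+2(k-1)=2k$ and $c_2(F)=2+2(k-1)+(k-1)^2=k^2+1$; and $F$ is globally generated as a non-negative twist of a globally generated bundle. Combined with the previous two regimes, this proves the corollary.
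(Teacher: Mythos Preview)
Your proof is correct and follows essentially the same strategy as the paper's: Le Potier for the generic stable case, explicit constructions for $\Delta=0$ and $\Delta=-4$, and G-duality to cover the rest. The only cosmetic differences are that you invoke duality at the outset (reducing to $y\leq c^2/2$, which makes the bound $\chi\geq 2+3c/2\geq 4$ immediate) whereas the paper applies duality at the end after pushing Le Potier up to $y\leq c(c+3)/2-2$; and for $\Delta=0$ you use the split bundle $\oc(k)\oplus\oc(k)$, whereas the paper twists the $(2,1)$ extension built from a single point.
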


\begin{proof} The existence condition ($\Delta < 0, \Delta \neq -4$) translates as: $y > c^2/4$, $y \neq c^2/4 +1$. Condition (1) of \ref{Le Potier} gives: $\frac{c(c+3)}{2}-2 \geq y$, hence if $\frac{c(c+3)}{2}-2 \geq y > \frac{c^2}{4}$ and $y \neq \frac{c^2}{4}+1$, $(c,y)$ is effective.
\par
Let's show that $(c,\frac{c^2}{4})$ is effective for every $c \geq 2$ ($c$ even). Consider:
$$0 \to \oc \to F \to \ic _Y(2) \to 0$$
where $Y$ is one point. Then $F$ is globally generated with Chern classes $(2,1)$. For every $m \geq 0$, $F(m)$ is globally generated with $c_1^2 =4c_2$.
\par
In the same way let's show that $(c, \frac{c^2}{4}+1)$ is effective for every $c \geq 2$ ($c$ even). This time consider:
$$0 \to \oc \to F \to \ic _Y(2) \to 0$$
where $Y$ is a set of two points; $F$ is globally generated with Chern classes $(2,2)$. For every $m \geq 0$, $F(m)$ is globally generated with the desired Chern classes.
\par
We conclude that if $\frac{c(c+3)}{2}-2 \geq y \geq \frac{c^2}{4}$, then $(c,y)$ is effective. By \emph{duality}, $(c,y)$ is effective if $\frac{3c^2}{4} \geq y \geq \frac{c(c-3)}{2}+2$. Putting every thing together we get the result.
\end{proof}

\begin{remark}
\label{y<c^2/4} 
Since $3c^2/4 > c^2/2$, we may, by \emph{duality}, concentrate on the range $y < c^2/4$, i.e. on not stable bundles with $\Delta > 0$, that's what we are going to do in the next section.
\end{remark}


\section{Cayley-Bacharach.}\label{S:CB}

\begin{definition}
\label{CB def}
Let $Y \subset \Ptw$ be a locally complete intersection (l.c.i.) zero-dimensional subscheme. Let $n \geq 1$ be an integer. We say that $Y$ satisfies Cayley-Bacharach for curves of degree $n$ ($CB(n)$), if any curve of degree $n$ containing a subscheme $Y' \subset Y$ of colength one (i.e. of degree $\deg Y -1$), contains $Y$.
\end{definition}

\begin{remark} Since $Y$ is l.c.i. for any $p \in Supp(Y)$ there exists a unique subscheme $Y' \subset Y$ of colength one (locally) linked to $p$ in $Y$. So Def. \ref{CB def} makes sense even if $Y$ is non reduced.
\end{remark}

Let's recall the following (\cite{GH}):

\begin{proposition}
\label{CB}
Let $Y \subset \Ptw$ be a zero-dimensional l.c.i. subscheme. There exists an exact sequence:
$$0 \to \oc \to F \to \ic _Y(c) \to 0$$
with $F$ a rank two vector bundle if and only if $Y$ satisfies $CB(c-3)$.
\end{proposition}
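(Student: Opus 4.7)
The approach is the Serre correspondence. First I would identify extensions $0\to \oc\to F\to \ic_Y(c)\to 0$ with elements of $\operatorname{Ext}^1(\ic_Y(c),\oc)$. The local-to-global spectral sequence for $\operatorname{Ext}$, together with the vanishing $H^1(\oc(-c))=0$ on $\Ptw$, yields an injection
$$\operatorname{Ext}^1(\ic_Y(c),\oc)\hookrightarrow H^0(\mathcal{E}xt^1(\ic_Y(c),\oc)).$$
Because $Y$ is a zero-dimensional l.c.i., the Koszul resolution of $\ic_Y$ (equivalently, local duality) identifies $\mathcal{E}xt^1(\ic_Y,\oc)\simeq \mathcal{E}xt^2(\oc_Y,\oc)\simeq \omega_Y\otimes \omega_{\Ptw}^{-1}$, so $\mathcal{E}xt^1(\ic_Y(c),\oc)$ is a rank-one locally free $\oc_Y$-module.

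Next I would invoke the classical local freeness criterion: the extension $F$ defined by $\xi$ is locally free if and only if, for every $p\in Y$ with linked colength-one subscheme $Y'\subset Y$, the class $\xi$ does not lie in the image of the pullback $\operatorname{Ext}^1(\ic_{Y'}(c),\oc)\to \operatorname{Ext}^1(\ic_Y(c),\oc)$. Indeed if $\xi$ is such a pullback then $F$ sits in $0\to F\to F'\to k_p\to 0$ with $F'$ the rank-two extension of $\ic_{Y'}(c)$, and the kernel of a surjection from a locally free rank-two sheaf onto $k_p$ cannot itself be locally free at $p$. Conversely, if $F$ fails to be locally free, then $F^{**}$ is a rank-two vector bundle on the smooth surface $\Ptw$ and the chain $\oc\hookrightarrow F\hookrightarrow F^{**}$ exhibits $F$, stepwise, as obtained from an extension attached to a strictly smaller subscheme.

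To translate into Cayley-Bacharach I would apply $\operatorname{Hom}(-,\oc)$ to $0\to \ic_Y(c)\to \ic_{Y'}(c)\to k_p\to 0$; using $\operatorname{Ext}^1(k_p,\oc)=0$ and $\operatorname{Ext}^2(k_p,\oc)\simeq k$ this gives
$$0\to \operatorname{Ext}^1(\ic_{Y'}(c),\oc)\to \operatorname{Ext}^1(\ic_Y(c),\oc)\to k.$$
By Serre duality on $\Ptw$, $\operatorname{Ext}^1(\ic_Z(c),\oc)\simeq H^1(\ic_Z(c-3))^{\vee}$, and the cohomology sequence of $0\to \ic_Y(c-3)\to \ic_{Y'}(c-3)\to k_p\to 0$ shows that the coboundary $H^0(\ic_{Y'}(c-3))\to k$ vanishes --- i.e., $CB(c-3)$ holds at $p$ --- precisely when the dual inclusion $\operatorname{Ext}^1(\ic_{Y'}(c),\oc)\hookrightarrow \operatorname{Ext}^1(\ic_Y(c),\oc)$ has codimension one.

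Finally, if $Y$ satisfies $CB(c-3)$, each of the finitely many subspaces $\operatorname{Ext}^1(\ic_{Y'(p)}(c),\oc)$ is a proper hyperplane in $\operatorname{Ext}^1(\ic_Y(c),\oc)$; since $k$ is infinite, a generic $\xi$ avoids their union and yields a locally free $F$. Conversely, if $CB(c-3)$ fails at some $p$, the corresponding inclusion is an equality, every $\xi$ descends to $\ic_{Y'(p)}$, and no locally free $F$ can be produced. The step I anticipate as the main obstacle is paragraph two: matching the sheaf-theoretic picture (a section of $\mathcal{E}xt^1$ must generate every stalk) with the descent picture (the class must not come from a colength-one subscheme); the remaining steps are routine Serre duality together with two long exact sequences.
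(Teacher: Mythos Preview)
The paper does not give its own proof of this proposition: immediately after the statement it simply writes ``See \cite{GH} (under the assumption that $Y$ is reduced) and \cite{Ca} for the general case.'' Your outline is precisely the classical Serre--correspondence argument that those references carry out, so in substance you are reproducing what the paper defers to the literature. The overall strategy (identify extensions with $\operatorname{Ext}^1$, use the local-to-global spectral sequence and the l.c.i.\ hypothesis to see that $\mathcal{E}xt^1(\ic_Y(c),\oc)$ is invertible on $Y$, translate local generation into the Cayley--Bacharach condition via Serre duality) is correct and standard.

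One small imprecision worth tightening in your second paragraph: when $\xi$ is pulled back from $\xi'\in\operatorname{Ext}^1(\ic_{Y'}(c),\oc)$ you obtain $0\to F\to F'\to k_p\to 0$, but you have no reason to know $F'$ is locally free---it is only torsion-free. Fortunately the conclusion still follows without that hypothesis: dualizing the sequence gives $(F')^*\simeq F^*$ (since $\mathcal{E}xt^1(k_p,\oc)=0$), hence $(F')^{**}\simeq F^{**}$; if $F$ were locally free then $F=F^{**}=(F')^{**}\supseteq F'\supseteq F$ with the composite the identity, forcing $F'=F$ and contradicting $F'/F\simeq k_p$. With this adjustment your forward implication is clean, and the rest of the argument is fine.
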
 

See \cite{GH} (under the assumption that $Y$ is reduced) and \cite{Ca} for the general case. The proposition gives conditions on the Chern classes of bundles having a section, in our case:

\begin{lemma}
\label{Bacharach1}
Let $F$ be a globally generated rank two vector bundle on $\Ptw$ with $c_1(F)=c$, $c_2(F)=y$, then:
$$c-1 \leq y \leq c^2-c+1\,\,\,\,or\,\,\,\, y=c^2\,\,\,or\,\,\,\,\,y=0$$
\end{lemma}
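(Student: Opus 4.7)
By Remark \ref{rmk c^2/2} the map $y \leftrightarrow c^2-y$ induced by G-duality preserves effectivity, so the upper bound $y \le c^2-c+1$ is exactly dual to $y \ge c-1$, and the two ``sporadic'' values $y=0$, $y=c^2$ are the split cases already discussed. It therefore suffices to prove the lower bound: if $(c,y)$ is effective and $y>0$, then $y \ge c-1$. Pick a general section of $F$: as recalled in Section \ref{S: gen} this yields
$$0 \to \oc \to F \to \ic_Y(c) \to 0$$
with $Y$ a reduced set of $y$ points, and by Proposition \ref{CB} the subscheme $Y$ satisfies $CB(c-3)$.

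Suppose for contradiction that $1 \le y \le c-2$; note this forces $c \ge 3$. Fix a point $p \in Y$ and set $Y' = Y \setminus \{p\}$, so $|Y'| = y-1 \le c-3$. Since $Y$ is reduced, for each $q \in Y'$ there are infinitely many lines through $q$, and all but one of them miss $p$; choose such a line $L_q$ for each $q$. The union $D_0 = \bigcup_{q \in Y'} L_q$ is a curve of degree $y-1$ which contains $Y'$ and avoids $p$. Adjoin $c-y-2 \ge 0$ further lines, each taken to miss $p$, to obtain a curve $D$ of degree exactly $c-3$ with $Y' \subset D$ and $p \notin D$. This directly violates the Cayley--Bacharach condition $CB(c-3)$, giving the contradiction.

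The only range this construction leaves open is the degenerate boundary $c=3$, $y=1$, where ``curves of degree $c-3=0$'' are vacuous. In this case I argue by instability: $\Delta(F) = c_1^2 - 4c_2 = 5 > 0$, so $F$ is not semistable; the maximal destabilizing line bundle $\oc(a) \hookrightarrow F$ satisfies $a > c/2$, hence $a \ge 2$. The choice $a=2$ would require a quotient $\ic_W(1)$ with $c_2 = 2+|W|=1$, i.e.\ $|W|=-1$, which is absurd; so only $a=3$ survives, giving $0 \to \oc(3) \to F \to \ic_q \to 0$ with $q$ a single point. Since $h^0(\ic_q)=0$, every global section of $F$ factors through the proper subsheaf $\oc(3) \subsetneq F$, so the evaluation $H^0(F) \otimes \oc \to F$ cannot be surjective, contradicting global generation of $F$.

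The main obstacle is arranging the auxiliary curve to have degree \emph{exactly} $c-3$ while avoiding $p$; the union-of-lines construction handles this cleanly whenever $c \ge 4$, and the small-$c$ boundary is then patched by the Bogomolov/destabilization argument above. Combining the lower bound with its G-dual version completes the proof.
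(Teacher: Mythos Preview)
Your proof is correct and follows essentially the same approach as the paper: Cayley--Bacharach plus a union-of-lines construction for the lower bound $y\ge c-1$, then G-duality for the upper bound; your separate treatment of $(c,y)=(3,1)$ via the Harder--Narasimhan filtration is a careful addition at a point where the paper's own proof is slightly loose (its Definition~\ref{CB def} requires $n\ge 1$, so $CB(0)$ is not literally available). One small imprecision: the phrase ``only $a=3$ survives'' is not justified as written---either observe that the quotient $\ic_W(3-a)$ of a globally generated bundle must itself be globally generated (forcing $3-a\ge 0$, hence $a\le 3$), or simply note that your non-generation argument works verbatim for every $a\ge 3$, since $h^0(\ic_W(3-a))=0$ in all those cases.
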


\begin{proof} Since $F$ is globally generated a general section vanishes in codimension two or doesn't vanish at all. In the second case $F \simeq 2.\oc$ and $y=0$. Let's assume, from now on, that a general section vanishes in codimension two. We have an exact sequence:
$$0 \to \oc \to F \to \ic _Y(c) \to 0$$
where $Y$ is a zero-dimensional subscheme (we may assume $Y$ smooth) which satisfies Cayley-Bacharach condition for $c-3$.
\par
If $c-3 \geq y-1$, $\forall p \in Y$ there exists a curve of degree $c-3$ containing $Y_p :=Y \setminus \{p\}$ and not containing $Y$ (consider a suitable union of lines). Since $Y$ must satisfy the Cayley-Bacharach condition, it must be $y \geq c-1$.
\par
Let $F$ be a globally generated rank two vector bundle with $c_1(F)=c$, $c_2(F)=y$. Consider the \emph{G-dual} bundle:
$$0 \to F^* \to 4.\oc \to E \to 0$$
then $E$ is a rank two, globally generated, vector bundle with $c_1(E)=c$, $c_2(E)=c^2-y$. By the previous part: $c_2(E)=0$ (i.e. $y=c^2$) or $c^2-y = c_2(E) \geq c_1(E)-1 = c-1$. So $c^2-c+1 \geq y$.
\end{proof}

\begin{remark}
\label{c<4}
It is easy to check that for $1 \leq c \leq 3$, every value of $y$, $c-1 \leq y \leq c^2-c+1$ is effective (take $Y \subset \Ptw$ of maximal rank with $c-1 \leq y \leq c^2/2$ and use Castelnuovo-Mumford's lemma to show that $\ic _Y(c)$ is globally generated). In fact gaps occur only for $c \geq 6$. In the sequel we will assume that $c \geq 4$.
\end{remark}

\section{The statement.} \label{S: statement}

\par
From now on we may restrict our attention to the range: $c-1 \leq y < c^2/4$ (\ref{y<c^2/4}, \ref{Bacharach1}) for $c \geq 4$ (\ref{c<4}). In this range $\Delta (F) = c^2-4y > 0$, hence $F$ is \emph{necessarily unstable} (i.e. not semi-stable). In particular, if $c$ is even: $h^0(F(-\frac{c}{2}))=h^0(\ic _Y(\frac{c}{2})) \neq 0$ (resp. $h^0(F(-\frac{(c+1)}{2}))=h^0(\ic _Y(\frac{c-1}{2}))\neq 0$, if $c$ is odd). So $Y$ is forced to lie on a curve of relatively low degree. In fact something more precise can be said, for this we need the following elementary remark: 

\begin{lemma}[The trick]
\label{trick}
Let $F$ be a rank two vector bundle on $\Ptw$ with $h^0(F) \neq 0$. If $c_2(F) < 0$, then $h^0(F(-1)) \neq 0$.
\end{lemma}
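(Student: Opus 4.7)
The plan is standard: analyze the vanishing locus of a nonzero section. Take $s \in H^0(F) \setminus \{0\}$; the zero-scheme $Z(s) \subset \Ptw$ either has pure codimension two, or contains an effective divisor as its one-dimensional part. I would dispose of the pure codimension-two case immediately: then $Z(s)$ is a zero-dimensional local complete intersection with $\deg Z(s) = c_2(F)$, which is $\geq 0$, contradicting the hypothesis $c_2(F) < 0$. Hence $Z(s)$ must have a nontrivial divisorial part, a curve $D$ of some degree $d \geq 1$.

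Once this is established, $s$ is locally divisible by an equation of $D$, so the inclusion $\oc \hookrightarrow F$ determined by $s$ saturates to a line bundle inclusion $\oc(d) \hookrightarrow F$ whose cokernel is $\ic_Z(c_1(F)-d)$ for some zero-dimensional (possibly empty) $Z \subset \Ptw$. Twisting by $\oc(-1)$ gives $\oc(d-1) \hookrightarrow F(-1)$; since $d-1 \geq 0$ we have $h^0(\oc(d-1)) > 0$, and composing with any nonzero section of $\oc(d-1)$ yields a nonzero section of $F(-1)$. Thus $h^0(F(-1)) \neq 0$.

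The only point requiring a brief justification is the saturation step, namely that a rank-one subsheaf of the locally free sheaf $F$ with torsion-free quotient is automatically a line bundle on the smooth surface $\Ptw$. This is routine (such a subsheaf is reflexive of rank one, hence invertible on a smooth surface), and I do not foresee any real obstacle; the whole content of the lemma is the Chern-class bookkeeping that forces $d \geq 1$.
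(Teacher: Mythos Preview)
Your argument is correct and is essentially the paper's own proof: a nonzero section cannot vanish purely in codimension two (else $c_2\geq 0$), so it vanishes along a divisor, and dividing out that divisor produces a section of a lower twist. The only cosmetic difference is that the paper singles out the nowhere-vanishing case separately (then $F\simeq\oc\oplus\oc(c)$ and $c_2=0$), whereas in your write-up this is absorbed into the ``pure codimension two'' case via $\deg Z(s)=c_2(F)\geq 0$; either way the conclusion is immediate.
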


\begin{proof} A non-zero section of $F$ cannot vanish in codimension two (we would have $c_2 >0$), nor can the section be nowhere non-zero ($F$ would split as $F \simeq \oc \oplus \oc (c)$, hence $c_2(F)=0)$. It follows that any section vanishes along a divisor. By dividing by the equation of this divisor we get $h^0(F(-1)) \neq 0$.
\end{proof}

Actually this works also on $\Pn$, $n\geq 2$.
\par
For $2 \leq t \leq c/2$ ($c \geq 4$) we define:
$$\overline{A}_t := [(t-1)(c-t+1),\,\,t(c-t)]  = [(t-1)c-(t-1)^2,\,\,\,(t-1)c-(t^2-c)]$$

The ranges $\overline A_t$ cover $[c-1,\,\,\frac{c^2}{4}[$, the interval we are interested in. From our point of view we may concentrate on the interior points of $\overline A_t$. Indeed if $y = ab$, with $a+b=c$, we may take $F \simeq \oc (a)\oplus \oc (b)$. So we define:
$$A_t = ](t-1)(c-t+1),\,\,\,t(c-t)[,\,\,\,2 \leq t \leq c/2$$

\begin{lemma}
\label{forced t-1}
If $y \in A_t$, and if $Y$ is the zero-locus of a section of $F$, a rank two vector bundle with Chern classes $(c,y)$, then $h^0(\ic _Y(t-1))\neq 0$.
\end{lemma}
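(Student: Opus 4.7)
The plan is to apply the trick (Lemma \ref{trick}) iteratively, starting from a section of $F(-k_0)$ with $k_0 = \lceil c/2 \rceil$, and descending to a section of $F(-(c-t+1))$, which then transfers to a section of $\ic_Y(t-1)$ via the defining exact sequence $0 \to \oc \to F \to \ic_Y(c) \to 0$.

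For the starting point, observe that $y \in A_t$ forces $y < t(c-t) \leq c^2/4$, so $\Delta(F) = c^2 - 4y > 0$. By the theorem of Schwarzenberger recalled after Definition \ref{stableBdles}, $F$ cannot be stable, so $h^0(F_{norm}) \neq 0$; equivalently $h^0(F(-k_0)) > 0$ for $k_0 = \lceil c/2 \rceil$.

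For the iteration, I claim that $c_2(F(-k)) = y - k(c-k) < 0$ for every integer $k$ with $k_0 \leq k \leq c - t$. Indeed, the function $f(s) = s(c-s)$ is symmetric about $c/2$ and decreasing on $[c/2, c]$, so on $[k_0, c-t]$ it attains its minimum at the right endpoint, with value $f(c-t) = t(c-t) > y$ by the definition of $A_t$. Applying Lemma \ref{trick} successively at $k = k_0, k_0 + 1, \ldots, c-t$ therefore produces $h^0(F(-(c-t+1))) > 0$. Finally, twisting $0 \to \oc \to F \to \ic_Y(c) \to 0$ by $\oc(-(c-t+1))$ and noting that $H^0(\oc(-(c-t+1))) = 0$ (since $c-t+1 \geq 1$), the long exact sequence gives an injection $H^0(F(-(c-t+1))) \hookrightarrow H^0(\ic_Y(t-1))$, so $h^0(\ic_Y(t-1)) > 0$.

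The main technical point is verifying the hypothesis $c_2(F(-k)) < 0$ uniformly along the chain of twists used in the iteration; but this reduces entirely to the single parabola estimate $t(c-t) > y$ built into the definition of $A_t$, so no serious obstacle is expected.
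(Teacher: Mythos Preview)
Your proof is correct and follows essentially the same approach as the paper: both start from $h^0(F(-\lceil c/2\rceil))\neq 0$ (using $\Delta(F)>0$), verify that $c_2(F(-k))<0$ for all $k$ between $\lceil c/2\rceil$ and $c-t$ via the parabola $k\mapsto k(c-k)$, and then iterate Lemma~\ref{trick} to reach $h^0(F(-(c-t+1)))\neq 0$, which equals $h^0(\ic_Y(t-1))$ via the defining exact sequence. Your write-up is in fact slightly more explicit than the paper's about the endpoints of the iteration and the injection $H^0(F(-(c-t+1)))\hookrightarrow H^0(\ic_Y(t-1))$.
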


\begin{proof} We have an exact sequence $0 \to \oc \to F \to \ic _Y(c) \to 0$. Now $c_2(F(-(c-t))=(-c+t)t+y$. By our assumptions, $y < t(c-t)$, hence $c_2(F(-c+t)) < 0$. Looking at the graph of $c_2(F(x)) = x^2 + cx +y$, we see that $c_2(F(x)) < 0$ for $(-c-\sqrt{\Delta (F)})/2 < x \leq -c/2$. Since $c_2(F(-c+t)) < 0$, $-c+t < -c/2$ and $h^0(F(-c/2))\neq 0$, by induction, using Lemma \ref{trick}, we conclude that $h^0(F(-c+t-1))=h^0(\ic _Y(t-1)) \neq 0$.
\end{proof}

\par
So if $y \in A_t$, $Y$ is forced to lie on a degree $(t-1)$ curve (but not on a curve of degree $t-2$). If general principles are respected we may think that if $y \in A_t$, $Y \subset T$, where $T$ is a \emph{smooth} curve of degree $t-1$ and that $h^0(\ic _Y(t-2))=0$. If this is the case we have an exact sequence:
$$0 \to \oc (-t+1) \to \ic _Y \to \ic _{Y,T} \to 0$$
twisting by $\oc _T(c)$:
$$0 \to \oc (c-t+1) \to \ic _Y(c) \to \oc _T(c-Y) \to 0$$
Since $c-t+1 > 0$ (because $c \geq 2t$), we see that: $\ic _Y(c)$ is globally generated if and only if $\oc _T(c-Y)$ is globally generated. The line bundle $\lc = \oc _T(c-Y)$ has degree $l:= c(t-1)-y$. So the question is: for which $l$ does there exists a degree $l$ line bundle on $T$ generated by global sections? This is, by its own, a quite natural problem which, strangely enough, has been solved only recently (\cite {GR}, \cite{Co}). First a definition:

\begin{definition}
\label{LS(t-1)}
Let $C$ be a smooth irreducible curve. The L\H{u}roth semi-group of $C$, $LS(C)$, is the semi-group of nonnegative integers which are degrees of rational functions on $C$. In other words: $LS(C) = \{n \in \bN \mid \exists \lc$, of degree $n$, such that $\lc$ is globally generated $\}$.
\end{definition}

Then we have:

\begin{theorem}[Greco-Raciti-Coppens]
If $C$ is a smooth plane curve of degree $d \geq 3$, then 
$$LS(C) = LS(d) := \bN \setminus \bigcup _{a=1}^{n_0} [(a-1)d+1,\,\,a(d-a)-1]$$ 
where $n_0$ is the integral part of $\sqrt{d-2}$.
\end{theorem}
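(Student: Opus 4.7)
The plan is to prove $LS(C) = LS(d)$ by establishing both inclusions. First observe that the complement of the claimed gap set in $\bN$ is $\{0\} \cup \bigcup_{a \geq 1} [a(d-a),\,ad]$, with consecutive intervals beginning to overlap precisely when $a$ exceeds $n_0$ (equivalently $a^2 \geq d$), so every sufficiently large degree lies outside every gap. For the inclusion $LS(d) \subseteq LS(C)$, given $n \in [a(d-a), ad]$ I set $b := ad - n \in [0, a^2]$ and construct a pencil $\Lambda \subset |\oc_{\Ptw}(a)|$ whose scheme-theoretic base locus meets $C$ in a subscheme $B$ of degree exactly $b$; then $\lc := \oc_C(a)(-B)$ is globally generated of degree $n$. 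For $a \leq 3$ the $a^2$ base points of $\Lambda$ can be taken generically since they impose independent conditions on $|\oc_{\Ptw}(a)|$ and leave residual dimension $\geq 2$: one places $b$ of them on $C$ and $a^2 - b$ off $C$. For $a \geq 4$ I would specialize using Cayley-Bacharach: since $a^2$ points that are the complete intersection of two degree-$a$ plane curves impose only $\binom{a+2}{2} - 2$ conditions on $|\oc_{\Ptw}(a)|$, a dimension count on the Grassmannian of pencils produces a pencil whose on-$C$ base locus has exactly $b$ points.

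For the reverse inclusion $LS(C) \subseteq LS(d)$, suppose for contradiction that $\lc$ is globally generated on $C$ of degree $n \in [(a-1)d + 1,\, a(d-a) - 1]$ with $a \leq n_0$. Pick $D \in |\lc|$ and a second independent section to obtain a base-point-free pencil of degree $n$ on $C$. The crux is Max Noether's theorem on special divisors of a smooth plane curve: every effective special divisor with $h^0 \geq 2$ is of the form $D = a' H|_C - E$ with $E$ effective and some $a' \geq 1$. Taking $a'$ minimal, the pencil $|D|$ corresponds to a plane pencil of degree-$a'$ curves all containing $E$ in their base locus, so $\deg E \leq a'^2$ by bounding by the total base-locus degree, yielding $n = a' d - \deg E \geq a'(d - a')$. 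Combined with the assumption $n \leq a(d-a) - 1 < a(d-a)$ and the monotonicity of $a'(d-a')$ for $a'$ in the relevant range, this forces $a' \leq a - 1$, giving $n \leq a' d \leq (a-1)d$, in direct contradiction with $n \geq (a - 1) d + 1$.

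The main obstacle is controlling the minimal $a'$ in the descent step---specifically, establishing and then sharply bounding the Noether representation $D = a' H|_C - E$. This extrinsic rigidity of plane curves (every base-point-free pencil of small degree is residual to a plane pencil of smaller degree) is what ultimately forces the gap pattern, and a careful Brill-Noether-style dimension count balancing $h^0(D) \geq 2$ against the Hilbert scheme of degree-$a'$ plane curves through $D$ is needed. Matching the threshold exactly with $a \leq n_0 = \lfloor\sqrt{d-2}\rfloor$---equivalently $a^2 \leq d - 2$, the non-emptiness condition for $[(a-1)d + 1,\, a(d-a) - 1]$---comes down to comparing the arithmetic inequalities $a'(d - a') \geq a(d - a)$ for $a' \geq a \leq n_0$ with the Cayley-Bacharach bound $\deg E \leq a'^2$, which is exactly the bookkeeping that pinches the contradiction.
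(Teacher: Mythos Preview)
The paper does \emph{not} prove this theorem: it is quoted as a result of Greco--Raciti and Coppens (\cite{GR}, \cite{Co}) and used as input. What the paper \emph{does} reprove, following \cite{GR}, is a variant of the non-existence (``gaps'') half, namely Proposition~\ref{Exist Gaps}: a smooth $Z$ on a degree-$d$ plane curve with $h^0(\ic_Z(a-1))=0$ and $(a-1)d+1\le\deg Z\le a(d-a)-1$ fails $CB(d-3)$. Since on a smooth plane curve $\oc_C(Z)$ globally generated forces $Z$ to satisfy $CB(d-3)$, this yields $LS(C)\subseteq LS(d)$. The paper's method is entirely different from yours: it analyses the numerical character $(n_0,\dots,n_{\sigma-1})$ of $Z$, shows the degree constraints force a jump $n_{r-1}>n_r+1$ somewhere below $d-2$, and then invokes the Ellia--Peskine splitting lemma (Lemma~\ref{trou}) to contradict $CB(d-3)$. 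There is no Noether-type representation $D=a'H-E$, no descent on $a'$, and no appeal to base loci of plane pencils.

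Your proposal, by contrast, is a sketch with acknowledged gaps rather than a proof. On the existence side, for $a\ge 4$ you only gesture at ``a dimension count on the Grassmannian of pencils''; this is exactly the substance of Coppens' paper \cite{Co} and is not trivial---generic pencils of degree-$a$ curves have base locus of length $a^2$, but arranging that precisely $b$ of those points lie on a \emph{fixed} smooth curve $C$ while the pencil remains honest requires real work. On the gaps side, your invocation of ``Max Noether's theorem'' is misleading: the representation $D\sim a'H-E$ with $E\ge 0$ always exists for $a'\gg 0$, so the content lies entirely in bounding the \emph{minimal} $a'$. Your argument that $\deg E\le a'^2$ via the lifted plane pencil is correct (minimality of $a'$ does exclude common components), and from $n\ge(a-1)d+1$, $n\le a'd$ you get $a'\ge a$. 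But then ``monotonicity of $a'(d-a')$ in the relevant range'' only gives a contradiction when $a'\le d-a$; you have not excluded $a'>d-a$, and your final paragraph concedes that controlling $a'$ is ``the main obstacle''. So the argument is incomplete precisely at its crux. The numerical-character approach in the paper sidesteps this difficulty entirely.
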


\par
Of course $LS(1)=LS(2)=\bN$. We observe that $LS(C)$ doesn't depend on $C$ but only on its degree.
\par
Going back to our problem we see that if $c(t-1)-y \notin LS(t-1)$, then  $\lc = \oc _T(c-Y)$ can't be globally generated and the same happens to $\ic _Y(c)$. In conclusion if $c(t-1)-y \in \displaystyle \bigcup _{a=1}^{n_0} [(a-1)(t-1)+1,\,\,\,a(t-1-a)-1]$, or if $c(t-1)-y < 0$, under our assumptions, $(c,y)$ is not effective. The assumption is that the \emph{unique} curve of degree $t-1$ containing $Y$ is \emph{smooth}. (Observe that $\deg \oc _T(t-1-Y)<0$, hence $h^0(\ic _Y(t-1))=1$.)
\par
Our theorem says that general principles are indeed respected. In order to have a more manageable statement let's introduce some notations:

\begin{definition}
\label{non-adm}
Fix an integer $c \geq 4$. An integer $y \in A_t$ for some $2 \leq t \leq c/2$, will be said to be \emph{admissible} if $c(t-1)-y \in LS(t-1)$. If $c(t-1)-y \notin LS(t-1)$, $y$ will be said to be \emph{non-admissible}.
\par
Observe that $y \in A_t$ is non-admissible if and only if: $y \in G_t(0) = [c(t-1)+1\,,\,t(c-t)-1]$ (this corresponds to $c(t-1)-y < 0$), or $y \in G_t(a) = [(t-1)(c-a)+a^2+1\,,\,(t-1)(c-a+1)-1]$ for some $a \geq 1$ such that $a^2+2 \leq t-1$ (i.e. $a \leq t_0$).
\end{definition}

In order to prove Theorem \ref{Thm} it remains to show:

\begin{theorem}
\label{Thm2}
For any $c\geq 4$ and for any $y \in A_t$ for some $2 \leq t \leq c/2$, $(c,y)$ is effective if and only if $y$ is admissible.
\end{theorem}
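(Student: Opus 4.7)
The proof splits into sufficiency (admissible $\Rightarrow$ effective) and necessity (effective $\Rightarrow$ admissible). Throughout, write $l := c(t-1)-y$; admissibility is the condition $l \in LS(t-1)$.

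\emph{Sufficiency.} Fix a smooth plane curve $T$ of degree $t-1$. By Greco-Raciti-Coppens there is a globally generated line bundle $\lc$ on $T$ of degree $l$. Since $c(t-1) - l = y \geq 0$ and Riemann-Roch on $T$ gives a positive-dimensional space of effective representatives, we can write $\lc \cong \oc_T(cH-Z)$ for an effective divisor $Z$ of degree $y$ on $T$, reduced after varying within its linear system (Bertini). From the exact sequence
$$0 \to \oc(c-t+1) \to \ic_Z(c) \to \lc \to 0$$
on $\Ptw$, together with $c-t+1>0$ (since $c \geq 2t$) and $H^1(\oc(c-t+1))=0$, we obtain that $\ic_Z(c)$ is globally generated. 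Next we verify the Cayley-Bacharach condition $CB(c-3)$ for $Z \subset \Ptw$: using the cohomology of the sequence, this reduces to requiring that each $p \in Z$ be a base point of $|\oc_T((c-3)H-Z+p)|$. By Serre duality on $T$ this is equivalent to asking that $p$ not be a base point of the line bundle $\oc_T((t-c-1)H+Z)$ of degree $(t-c-1)(t-1)+y$, a condition that holds for generic choice of $Z$ in its linear system. Proposition \ref{CB} then yields a rank two vector bundle $F$ with sequence $0 \to \oc \to F \to \ic_Z(c) \to 0$ and Chern classes $(c,y)$, globally generated since $\ic_Z(c)$ is.

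\emph{Necessity.} Let $F$ be globally generated with $c_1(F)=c$, $c_2(F)=y \in A_t$. A general section of $F$ has zero scheme $Y$ of degree $y$, and Lemma \ref{forced t-1} puts $Y$ on a curve $T$ of degree $t-1$. If $T$ can be chosen smooth and irreducible, the argument of Section \ref{S: statement} applies directly: global generation of $\ic_Y(c)$ forces global generation of $\oc_T(cH-Y)$, a line bundle of degree $l$ on $T$, and Greco-Raciti-Coppens gives $l \in LS(t-1)$, i.e., admissibility. The substantive case is when every curve of degree $t-1$ containing $Y$ is singular or reducible.

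\emph{The singular-$T$ case.} The plan, following the blueprint of the introduction, is threefold. (i) Pass from the general section of $F$ to its \emph{minimal} section, namely a section of the largest twist $F(-m)$ still vanishing in codimension two, and extract from it a distinguished zero-dimensional subscheme $W \subset \Ptw$ of degree $l$, playing the role of the residual divisor of $Y$ on $T$ in the smooth case. (ii) Prove, via a preparatory lemma of the type sketched in the introduction, that the global generation of $F$ forces $W$ to satisfy the Cayley-Bacharach condition $CB(t-4)$ \emph{intrinsically in $\Ptw$}; this echoes the classical fact that the zero divisor of a globally generated line bundle on a smooth plane curve of degree $d$ satisfies $CB(d-3)$, but does not require smoothness of the ambient curve. (iii) Apply a modification of Theorem~3.1 of \cite{GR}, rephrased as a statement about $CB(t-4)$ zero-dimensional subschemes of $\Ptw$ of degree $l$ rather than about line bundles on smooth plane curves, to conclude that such $W$ cannot exist when $l$ lies in a gap interval $[(a-1)(t-1)+1,\,a(t-1-a)-1]$ for some $1 \leq a \leq t_0$ or when $l<0$. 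Hence $l \in LS(t-1)$. The main obstacle is step (ii): establishing the $CB(t-4)$ property of $W$ in the absence of smoothness of $T$; once this is in hand, the adaptation in step (iii) of Greco-Raciti's original argument (which produces auxiliary curves incompatible with the given line bundle) to auxiliary curves in $\Ptw$ incompatible with $W$ is relatively routine.
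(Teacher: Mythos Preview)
Your necessity sketch tracks the paper's argument closely (minimal section, residual scheme, $CB(t-4)$, then a Greco--Raciti style contradiction), though you mislabel: the subscheme of degree $l$ satisfying $CB(t-4)$ is not $W$ itself but the scheme $Z$ linked to $W$ by two general curves of $|\ic_W(t-1)|$; the paper needs this extra linkage step (Lemma \ref{link W}--Corollary \ref{Z CBt-4}) precisely to get a \emph{smooth} $Z$ sitting in a pencil of degree $t-1$ curves with zero-dimensional base locus, hypotheses used when applying Proposition \ref{Exist Gaps}.

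The genuine gap is in your sufficiency argument. You reduce $CB(c-3)$ for $Z\subset T$ to the condition that no $p\in Z$ be a base point of $|\oc_T((t-c-1)H+Z)|=|\lc^*(t-1)|$, and then assert this ``holds for generic choice of $Z$ in its linear system''. But varying $Z$ inside $|\lc^*(c)|$ does not change the line bundle $\lc^*(t-1)$; if $h^0(\lc^*(t-1))=0$ then every point of $T$ is a base point and \emph{no} $Z$ in that system satisfies $CB(c-3)$. This situation actually occurs: $\deg\lc^*(t-1)=(t-1)^2-l$ can be below $g_T$, and the line bundle produced by Coppens need not have $\lc^*(t-1)$ effective. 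The paper confronts exactly this obstacle: when $h^1(\lc)\neq 0$ (so $h^0(\lc^*(t-4))>0$) it argues directly (Lemma \ref{h1L>0}), but when $h^1(\lc)=0$ it abandons the given $\lc$ and manufactures a new one by linking a general set $R$ of $r=(t-1)^2-l$ points to $Z$ via a complete intersection $T\cap T'$ of two smooth degree $t-1$ curves (Lemma \ref{R gene}, Proposition \ref{Existence}). That construction guarantees $h^0(\ic_Z(t-1))\geq 2$, hence provides a second curve $T'$ through $Z$ disjoint from $Y$, which is what drives the $CB(c-3)$ verification. Your argument supplies no substitute for this step. You also use, without comment, that $\lc^*(c)$ is base-point-free (needed for a reduced $Z$ avoiding a prescribed finite set); this is true because $y\geq 2g_T+1$ (Lemma \ref{L*(c) epsg}), but it should be said.
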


\par
The proof splits into two parts:
\begin{enumerate}
\item {\it (Gaps)} If $c(t-1)-y \notin LS(t-1)$, one has to prove that $(c,y)$ is not effective. This is clear if $Y$ lies on a smooth curve, $T$, of degree $t-1$, but there is no reason for this to be true and the problem is when $T$ is singular.
\item {\it (Existence)} If $c(t-1)-y \in LS(t-1)$, one knows that there exists $\lc$ globally generated, of degree $c(t-1)-y$ on a smooth curve, $T$, of degree $t-1$. The problem is to find such an $\lc$ such that $\mc := \oc _T(c)\otimes \lc ^*$ has a section vanishing along a $Y$ satisfying the Cayley-Bacharach condition for $(c-3)$.
\end{enumerate}

\section{The proof (gaps).}

In this section we fix an integer $c \geq 4$ and prove that non-admissible $y \in A_t$, $2 \leq t \leq c/2$ are gaps. For this we will assume that such a $y$ is effective and will derive a contradiction. From \ref{forced t-1} we know that $h^0(\ic _Y(t-1))\neq 0$. The first task is to show that under our assumption ($y$ not-admissible), $h^0(\ic _Y(t-2))=0$ (see \ref{Yt-1=1 si gap}); this will imply that $F(-c+t-1)$ has a section vanishing in codimension two.
\par
To begin with let's observe that non-admissible $y \in A_t$ may occur only when $t$ is small with respect to $c$.

\begin{lemma}
\label{NoGap-tGrand}
Assume $c \geq 4$. If $t > \frac{2\sqrt 3}{3}\sqrt{c-2}$, then every $y \in A_t$ is admissible.
\end{lemma}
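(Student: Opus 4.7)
The plan is to prove the contrapositive by pure interval arithmetic: assuming the hypothesis, I will show that $A_t \cap G_t(a) = \emptyset$ for every $0 \leq a \leq t_0$, which by Definition \ref{non-adm} means every $y \in A_t$ is admissible. Concretely, I will show that each $G_t(a)$ sits strictly above $A_t$ (or is empty), by comparing endpoints.

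First I would unfold the definitions and compute the key quantity
$$\phi(a) \;:=\; [\text{lower endpoint of } G_t(a)] \;-\; [\text{upper endpoint of } A_t].$$
A short expansion gives, uniformly for $a \geq 0$ (the formulas for $G_t(0)$ and $G_t(a)$, $a \geq 1$, agree at $a=0$),
$$\phi(a) \;=\; t^2 \;+\; a^2 \;-\; a(t-1) \;+\; 2 \;-\; c \;=\; t^2 \;-\; a(t-1-a) \;+\; 2 \;-\; c.$$
Next, I must rule out the other way for $G_t(a)$ and $A_t$ to be disjoint, namely that $G_t(a)$ lies strictly below $A_t$. For $a \geq 1$ this reduces to $(t-1)(t-a) < 2$, which fails for all $t \geq 3$ and $a \leq t_0 < t$; for $a=0$ the upper endpoints of $G_t(0)$ and $A_t$ coincide, so this case is automatic. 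Hence showing $\phi(a) > 0$ suffices.

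Finally, the vertex estimate $a(t-1-a) \leq (t-1)^2/4$ (valid for all real $a$) gives
$$\phi(a) \;\geq\; t^2 \;-\; \frac{(t-1)^2}{4} \;+\; 2 \;-\; c \;=\; \frac{3t^2 + 2t - 1}{4} \;-\; (c-2).$$
Squaring the hypothesis $t > \tfrac{2\sqrt{3}}{3}\sqrt{c-2}$ yields precisely $c - 2 < 3t^2/4$, so
$$\phi(a) \;>\; \frac{3t^2 + 2t - 1}{4} \;-\; \frac{3t^2}{4} \;=\; \frac{2t-1}{4} \;>\; 0,$$
uniformly in $a \in [0, t_0]$. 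Therefore $G_t(a) \cap A_t = \emptyset$ for every admissible $a$, which is the claim.

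I do not anticipate any serious obstacle: the proof is essentially one short endpoint computation followed by the observation that the numerical hypothesis was tailored exactly to keep the quadratic $\phi$ positive on all of $\bR$, so neither the integrality of $a$ nor the restriction $a \leq t_0$ play a role. The only care needed is the quick check that $G_t(a)$ cannot lie \emph{below} $A_t$, which is handled by the trivial estimate $(t-1)(t-a) \geq 2$.
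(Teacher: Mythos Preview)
Your argument is correct and follows essentially the same route as the paper: both reduce the claim to showing that the quadratic $a^2-(t-1)a+(t^2-c+\text{const})$ is positive for all relevant $a$, the paper via a discriminant computation (done separately for $a=0$ and $a\geq 1$), you via the vertex bound $a(t-1-a)\leq (t-1)^2/4$ applied uniformly. Two small remarks: the paragraph about ``ruling out the other way for $G_t(a)$ and $A_t$ to be disjoint'' is unnecessary and misworded --- once $\phi(a)>0$ you have placed $G_t(a)$ strictly above $A_t$, which already gives $G_t(a)\cap A_t=\emptyset$; and your parenthetical that the formulas for $G_t(0)$ and $G_t(a)$ ``agree at $a=0$'' is only true for the \emph{lower} endpoint, which is harmless since that is all $\phi$ uses.
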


\begin{proof}
Recall (see \ref{non-adm}) that $y \in A_t$, $2 \leq t \leq c/2$, is non admissible if and only if $y \in G_t(a)$ for some $a, 0 \leq a \leq t_0$.
\par
We have $G_t(0) \neq \emptyset \Leftrightarrow t(c-t)-1 \geq c(t-1)+1 \Leftrightarrow t \leq \sqrt{c-2}$.
\par
For $a \geq 1$, $G_t(a) \cap A_t \neq \emptyset \Rightarrow (t-1)(c-a)+a^2+1 < t(c-t)$. This is equivalent to: $a^2-at+t^2-c+a+1 < 0\,\,(*)$. The discriminant of this equation in $a$ is $\Delta = -3t^2+4(c-a-1)$ and we must have $\Delta \geq 0$, i.e. $\frac{2\sqrt 3}{3}\sqrt{c-2}\geq t$.
\end{proof}

\par
Let's get rid of the $y's$ in $G_t(0)$:

\begin{lemma}
\label{c(t-1)-y<0}
If $y \in A_t$ is non-admissible and effective, then $y \in G_t(a)$ for some $a$, $1 \leq a \leq \sqrt{t-3}$.
\end{lemma}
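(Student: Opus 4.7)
The plan is to rule out the interval $G_t(0)=[c(t-1)+1,\,t(c-t)-1]$ by a direct Bezout argument, leaving only $G_t(a)$ with $a\ge 1$ as possible locations for a non-admissible effective $y$.

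First, I would assume for contradiction that $y\in A_t\cap G_t(0)$ is effective, so there is a rank two globally generated bundle $F$ with $c_1(F)=c$, $c_2(F)=y$, and $y\ge c(t-1)+1$. A general section of $F$ gives
$$0\to\oc\to F\to\ic_Y(c)\to 0$$
with $Y\subset\Ptw$ a reduced set of exactly $y$ points (Section \ref{S: gen}). Since $y\in A_t$, Lemma \ref{forced t-1} yields $h^0(\ic_Y(t-1))\ne 0$, so $Y$ is contained in some plane curve $T$ of degree $t-1$.

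Second, I would use that $\ic_Y(c)$ is itself globally generated (because $F$ is). By Bertini, a general member of the linear system $|\ic_Y(c)|$ is smooth away from its base locus, which is contained in $Y$; and at each reduced point of $Y$ the stalk $\ic_{Y,p}$ is generated by two elements whose general combination cuts out a smooth germ. Hence there exists a smooth plane curve $D$ of degree $c$ with $Y\subset D$. On $\Ptw$ any two components of a curve must meet, and such an intersection point would be singular on the curve; thus smoothness forces irreducibility, so $D$ is irreducible.

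Third, I would apply Bezout. Since $c\ge 2t>t-1$ and $D$ is irreducible of degree $c$, $D$ cannot contain any component of $T$; the two curves share no component, and the intersection cycle $D\cap T$ has length exactly $c(t-1)$. In particular the underlying set satisfies $|D\cap T|\le c(t-1)$. Since $Y$ is a reduced subset of $D\cap T$, we get
$$y=|Y|\le|D\cap T|\le c(t-1),$$
contradicting $y\ge c(t-1)+1$. Therefore $G_t(0)$ contains no effective $y$, and the only way a non-admissible $y\in A_t$ can be effective is that it lies in some $G_t(a)$ with $1\le a\le t_0=\lfloor\sqrt{t-3}\rfloor$, as claimed.

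The only real subtlety is justifying that a general divisor in $|\ic_Y(c)|$ is both smooth and irreducible; but since $Y$ is reduced and $\ic_Y(c)$ is globally generated, Bertini delivers smoothness, and in $\Ptw$ smoothness of a plane curve forces irreducibility. Once that is in hand, Bezout closes the argument immediately.
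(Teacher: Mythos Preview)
Your proof is correct and follows essentially the same route as the paper: rule out $G_t(0)$ by noting that $Y$ lies on a degree $t-1$ curve (Lemma~\ref{forced t-1}) and on an irreducible degree~$c$ curve (from global generation of $\ic_Y(c)$), so B\'ezout gives $y\le c(t-1)$. The paper phrases this as ``$Y$ is contained in a complete intersection of type $(t-1,c)$'' and had already recorded in Section~\ref{S: gen} that the general degree~$c$ curve through $Y$ is smooth, hence irreducible; you simply spell out that step again.
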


\begin{proof} We have to show that if $c(t-1)-y<0$ and $y \in A_t$, then $y$ is not effective. By \ref{forced t-1} $h^0(\ic _Y(t-1))\neq 0$. If $y$ is effective then $\ic _Y(c)$ is globally generated and $Y$ is contained in a complete intersection of type $(t-1, c)$, hence $\deg Y=y \leq c(t-1)$: contradiction.
\end{proof}

Now we show that if $y$ is non-admissible and effective, then $h^0(\ic _Y(t-1))=1$:

\begin{lemma}
\label{Yt-1=1 si gap}
Let $c \geq 4$ and assume $y \in A_t$ for some $t$, $2 \leq t \leq c/2$. Assume furthermore that $y$ is non-admissible and effective i.e.:
$$y = (t-1)(c-a)+\aG, \,\,\,a^2+1 \leq \aG \leq t-2$$
for a given $a$ such that $t-1 \geq a^2 +2$. Under these assumptions, $h^0(\ic _Y(t-1))=1$.
\end{lemma}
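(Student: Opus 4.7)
The plan is to argue by contradiction: assume $h^0(\ic_Y(t-1)) \geq 2$, analyse the pencil of degree-$(t-1)$ curves through $Y$, and derive an upper bound on $y$ incompatible with the explicit form $y=(t-1)(c-a)+\alpha$ and with $y \in A_t$. Specifically, pick two linearly independent sections $s_1, s_2 \in H^0(\ic_Y(t-1))$ and factor $s_i = d \cdot s_i'$ where $d = \gcd(s_1,s_2)$ has degree $\delta$ and $\gcd(s_1',s_2')=1$; independence forces $0 \leq \delta \leq t-2$. The base locus of the pencil is $D \cup W$ where $D = V(d)$ is a curve of degree $\delta$ and $W = V(s_1',s_2')$ is a zero-dimensional complete intersection of degree $u^2$, $u := t-1-\delta$. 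Hence $Y \subset D \cup W$; writing $Y_1 := Y \cap \mathrm{Supp}(D)$ and $Y_2 := Y \setminus Y_1 \subset W$ we have $y = y_1 + y_2$ with $y_2 \leq u^2$.

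The case $\delta = 0$ is immediate: $Y \subset W$ forces $y \leq (t-1)^2$, contradicting $y > (t-1)(c-t+1) \geq (t-1)(t+2) > (t-1)^2$ (which is valid since $y \in A_t$ forces $c \geq 2t+1$). For $1 \leq \delta \leq t-2$ the key is to bound $y_1$ via B\'ezout. Global generation of $\ic_Y(c)$ (quotient of the globally generated $F$) implies that its base locus coincides with the zero-dimensional scheme $Y$, so no irreducible component of $D$ can lie in the base locus. Picking a general $\Gamma \in |\ic_Y(c)|$ avoiding every component of $D_{\mathrm{red}}$ and applying B\'ezout to each component yields $|Y_1 \cap D_i| \leq c \deg D_i$ for every component $D_i$ of $D_{\mathrm{red}}$, hence $y_1 \leq c\delta$. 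Together with $y_2 \leq u^2$ this gives $y \leq c\delta + u^2 = c(t-1-u) + u^2$.

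Substituting $y = (t-1)(c-a)+\alpha$ and simplifying yields $u(c-u) \leq (t-1)a - \alpha \leq a(t-1-a) - 1$ (using $\alpha \geq a^2+1$). Since $u \leq t-2 \leq c/2$, the map $u \mapsto u(c-u)$ is increasing on $[1,t-2]$, so its minimum $c-1$ at $u=1$ forces $c \leq a(t-1-a)$. On the other hand, $y \in A_t$ means $y < t(c-t)$, equivalent to $c > t^2 - (t-1)a + \alpha \geq t^2 - (t-1)a + a^2 + 1$. The quadratic $2a^2 - 2(t-1)a + (t^2+1)$ has discriminant $4(t-1)^2 - 8(t^2+1) = -4(t+1)^2 < 0$, so it is strictly positive; this translates to $t^2 - (t-1)a + a^2 + 1 > a(t-1-a)$, giving $c > a(t-1-a)$ and contradicting the previous inequality.

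The main obstacle is the step turning global generation of $\ic_Y(c)$ into a clean B\'ezout bound on $y_1$: the argument must work when $D$ is reducible or non-reduced. What makes it go through is precisely the fact that the base locus of $|\ic_Y(c)|$ is $0$-dimensional, which prevents any $1$-dimensional component of $D$ from being forced into every member of the linear system; a generic $\Gamma$ then meets each component of $D_{\mathrm{red}}$ properly and B\'ezout applies componentwise. Once this is in place, the remaining work is purely arithmetic and is controlled by the negative discriminant of a single quadratic in $a$.
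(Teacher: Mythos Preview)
Your proof is correct, but the route differs from the paper's. The paper does not analyse a pencil in degree $t-1$ at all; instead it first shows $h^0(\ic_Y(t-2))=0$ by a single B\'ezout step (if $Y$ lies on a curve of degree $t-2$ then $y\le c(t-2)$, and plugging in $y=(t-1)(c-a)+\alpha$ leads to a quadratic in $a$ with discriminant $(t-1)^2-4(c+1)$, which is negative once one invokes the earlier bound $t\le\frac{2\sqrt3}{3}\sqrt{c-2}$ from Lemma~\ref{NoGap-tGrand}). Then it passes through the bundle: the unique section of $F(-c+t-1)$ must vanish in codimension two, and the exact sequence $0\to\oc\to F(-c+t-1)\to\ic_W(-c+2t-2)\to0$ with $-c+2t-2<0$ forces $h^0(F(-c+t-1))=1$.

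Your argument is more self-contained: it never calls on Lemma~\ref{NoGap-tGrand} and works entirely at the level of linear systems, the key arithmetic being governed by the negative discriminant of $2a^2-2(t-1)a+(t^2+1)$. The paper's version, on the other hand, isolates the intermediate fact $h^0(\ic_Y(t-2))=0$ explicitly, and this is reused later (in Lemma~\ref{T reduite}); your argument gives it too, but only implicitly (a nonzero section in degree $t-2$ would produce, after multiplication by linear forms, at least two independent sections in degree $t-1$). One small stylistic remark: rather than summing B\'ezout over the components of $D_{\mathrm{red}}$, you can simply note that $D_{\mathrm{red}}$ and the general $\Gamma\in|\ic_Y(c)|$ share no component, so $y_1\le\deg(D_{\mathrm{red}})\cdot c\le\delta c$ in one stroke.
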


\begin{proof}
If $h^0(\ic _Y(t-2))\neq 0$, then $y \leq c(t-2)$ (the general $F_c \in H^0(\ic _Y(c))$ is integral since $\ic _Y(c)$ is globally generated. Moreover $t-1 < c$ so $F_c \neq T$). It follows that:
$$y = (t-1)(c-a)+\aG \leq c(t-2)=c(t-1)-c$$
This yields $a(t-1) \geq c +\aG$. We have $c+\aG \geq c+a^2+1$, hence:
$$0 \geq a^2-a(t-1)+c+1\,\,\,\,(*)$$
The discriminant of $(*)$ (viewed as an equation in $a$) is: $\Delta = (t-1)^2-4(c+1)$. If $\Delta  < 0$, $(*)$ is never satisfied and $h^0(\ic _Y(t-2))=0$. Now $\Delta < 0 \Leftrightarrow (t-1)^2 < 4(c+1)$. In our context $\Delta < 0 \Leftrightarrow t < 1+2\sqrt{c+1}$. In conclusion if $t < 1+2\sqrt{c+1}$ and if $y$ is non-admissible, then $h^0(\ic _Y(t-2))=0$.  
\par
Now by \ref{NoGap-tGrand} if $y$ is non-admissible, we have: $t \leq \frac{2\sqrt 3}{3}\sqrt{c-2}$. Since $\frac{2\sqrt 3}{3}\sqrt{c-2} < 1+2\sqrt{c+1}$, $\forall c > 0$, we are done.
\par
Since $h^0(\ic _Y(t-1))\neq 0$, $F(-c+t-1)$ has a non-zero section, since $h^0(\ic _Y(t-2))=0$ the section vanishes in codimension two. Hence we have:
$$0 \to \oc \to F(-c+t-1) \to \ic _W(-c+2t-2) \to 0$$
where $\deg W =y - (t-1)(c-t+1)$. Since $-c+2t-2 < 0$ (because $c \geq 2t$), we get $h^0(F(-c+t-1))=1=h^0(\ic _Y(t-1))$.
\end{proof}

\begin{notations}
\label{notation gap}
Let $F$ be a globally generated rank two vector bundle with Chern classes $(c,y)$. A section $s \in H^0(F)$ defines $Y_s = (s)_0$. If $y \in A_t$, $h^0(\ic _{Y_s}(t-1))\neq 0$, moreover if $y$ is non-admissible $h^0(\ic _{Y_s}(t-1))=1$ and there is a unique $T_s \in H^0(\ic _{Y_s}(t-1))$. It follows that $F(-c+t-1)$ has a unique section (hence vanishing in codimension two): $0 \to \oc \stackrel{u}{\to} F(-c+t-1) \to \ic _W(-c+2t-2) \to 0$.
\end{notations}

\begin{lemma}
\label{T reduite}
If $y \in A_t$ is non-admissible and effective, with notations as in \ref{notation gap}:
\begin{enumerate}
\item $Y_s$ and $W$ are bilinked on $T_s$
\item The curves $T_s$ are precisely the elements of $H^0(\ic _W(t-1))$
\item $\ic _W(t-1)$ is globally generated, in particular for $s \in H^0(F)$ general, $T_s$ is reduced.
\end{enumerate}
\end{lemma}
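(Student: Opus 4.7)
The argument rests on twisting the exact sequence from \ref{notation gap} by $\oc(c-t+1)$ to obtain
\[
0 \to \oc(c-t+1) \xrightarrow{u} F \to \ic_W(t-1) \to 0, \qquad (\star)
\]
which I then play against the Serre sequence $0\to\oc\xrightarrow{s}F\to\ic_{Y_s}(c)\to 0$ and the uniqueness statement $h^0(\ic_{Y_s}(t-1))=1$ from \ref{Yt-1=1 si gap}.

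\emph{The inclusion $W\subset T_s$.} I first observe that the composition $\oc\xrightarrow{s}F\twoheadrightarrow\ic_W(t-1)$ produces a section $\sigma\in H^0(\ic_W(t-1))$ which is non-zero: otherwise $s$ would factor as $f\cdot u$ for some $f\in H^0(\oc(c-t+1))$, forcing the divisor $\{f=0\}$ into $(s)_0$ and contradicting $Y_s$ being of codimension two. Since $\sigma$ also vanishes along $Y_s$, it lies in $H^0(\ic_{Y_s}(t-1))$, which is one-dimensional and generated by $T_s$; hence $\sigma$ is a scalar multiple of $T_s$ and $W\subset T_s$.

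\emph{Part (1).} With $W,Y_s\subset T_s$, I restrict the two sequences to the Cartier divisor $T_s\subset\Ptw$. The composition $\oc_{T_s}\hookrightarrow F|_{T_s}\twoheadrightarrow\ic_{W,T_s}(t-1)$ is the restriction of $\sigma$, i.e.\ $T_s|_{T_s}=0$, so the sub $\oc_{T_s}$ factors through $\oc_{T_s}(c-t+1)\hookrightarrow F|_{T_s}$. This yields a three-step filtration of $F|_{T_s}$ whose successive quotients are $\oc_D$ (the structure sheaf of a Cartier divisor $D$ of degree $(c-t+1)(t-1)$ on $T_s$) and $\ic_{W,T_s}(t-1)$. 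Snake-lemma comparison with the first filtration then gives an exact sequence
\[
0 \to \oc_D \to \ic_{Y_s,T_s}(c) \to \ic_{W,T_s}(t-1) \to 0
\]
on $T_s$, equivalently the linear equivalence $\oc_{T_s}(Y_s-W)\simeq\oc_{T_s}(c-t+1)$, which is the claimed bilinkage.

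\emph{Parts (2) and (3).} The cohomology of $(\star)$ gives $H^0(F)\twoheadrightarrow H^0(\ic_W(t-1))$, since $H^1(\oc(c-t+1))=0$ (as $c-t+1\geq t+1>0$). Hence every $T\in H^0(\ic_W(t-1))$ lifts to some $s'\in H^0(F)$, and the image of $s'$ in $\ic_W(t-1)$ is $T$, which automatically vanishes along $Y_{s'}$; the same uniqueness forces $T=T_{s'}$ up to scalar, proving (2). For (3), the surjection $F\twoheadrightarrow\ic_W(t-1)$ in $(\star)$ and the global generation of $F$ yield global generation of $\ic_W(t-1)$; by (2) the family $\{T_s\}$ coincides with the linear system $|\ic_W(t-1)|$, which is base-point-free outside $W$. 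The usual Bertini argument on $\Ptw\setminus W$, together with the fact that at each $p\in W$ two sections of $\ic_W(t-1)$ locally generate $\ic_{W,p}$ and so a generic combination is smooth at $p$, delivers reducedness of the general $T_s$.

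\emph{Main obstacle.} The technical heart is the diagram chase of Step 2 on $T_s$, which need not be reduced; the restriction of the Serre sequences is nonetheless clean because $T_s\subset\Ptw$ is a Cartier divisor in a smooth surface, and once the inclusion $W\subset T_s$ is in hand the bilinkage is a purely formal consequence of the two filtrations of $F|_{T_s}$.
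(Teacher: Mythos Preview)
Your argument follows essentially the same route as the paper's: both proofs juxtapose the sequence $0\to\oc(c-t+1)\xrightarrow{u}F\to\ic_W(t-1)\to 0$ with the Serre sequence for $s$ and extract the relation $\ic_{W,T_s}\simeq\ic_{Y_s,T_s}(c-t+1)$; the paper organises this as a $3\times3$ diagram on $\Ptw$, while you restrict to $T_s$ and phrase it as two filtrations of $F|_{T_s}$. These are equivalent presentations.

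Two small points are worth tightening:

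\textbf{Part (2).} When you lift a given $T\in H^0(\ic_W(t-1))$ to $s'\in H^0(F)$, you implicitly use that $Y_{s'}$ is zero-dimensional so that $T_{s'}$ (the \emph{unique} curve of degree $t-1$ through $Y_{s'}$) is defined. An arbitrary lift need not vanish in codimension two. The fix is short: the fibre over $T$ is the affine space $s_0+H^0(\oc(c-t+1))\cdot u$; since $u$ vanishes only on the zero-dimensional $W$, for any irreducible curve $D$ the condition that $s_0+fu$ vanish along $D$ cuts out a proper affine subspace in $f$, hence a generic lift vanishes in codimension two. (The paper handles this direction by an explicit linkage argument starting from $W$.)

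\textbf{Part (3).} Your last sentence asserts that two local generators of $\ic_{W,p}$ force a generic combination to be \emph{smooth} at $p$. This is false when $W$ has embedding dimension two at $p$ (e.g.\ $\ic_{W,p}=(x^2,y^2)$, where every section is singular at $p$). What survives is reducedness: since $\ic_W(t-1)$ is globally generated and $W$ is zero-dimensional, the linear system has no fixed component, and Bertini gives that the general member is reduced. This is exactly what the lemma claims and what the paper asserts.
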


\begin{proof}\quad \\
(1) (2) We have a commutative diagram:
$$\begin{array}{ccccccccc}

 		&					& 								& 										& 0									& 			& 0										& 			&  \\
 		&					& 								& 										& \downarrow				& 			& \downarrow					& 			&  \\
 		&					& 								& 										& \oc								& 	=		& \oc									& 			&  \\
 		&					& 								& 										& \downarrow u			& 			& \downarrow T_s				& 			&  \\
0		&		\to		& \oc (-c+t-1)		& \stackrel{s}{\to}		& F(-c+t-1)					& \to		& \ic _Y(t-1)					& \to		& 0 \\

 		&					& ||							& 										& \downarrow				& 			& \downarrow					& 			&  \\

0		&		\to		& \oc (-c+t-1)		& \stackrel{s}{\to}		& \ic _W(-c+2t-2)		& \to		& \ic _{Y,T_s}(t-1)		& \to		& 0 \\
 		&					& 								& 										& \downarrow				& 			& \downarrow					& 			&  \\
 		&					& 								& 										& 0									& 			& 0										& 			&  \end{array}$$
 		
\par
This diagram is obtained as follows: the section $T_s$ lifts to, $u$, the unique section of $F(-c+t-1)$, hence $Coker(u) \simeq \ic _W(-c+2t-2)$, then take the first horizontal line corresponding to $s$ and complete the full diagram.

We see that $s$ corresponds to an element of $H^0(\ic _W(t-1))$ and the quotient $\oc (-t+1) \stackrel{s}{\to}\ic _W$ has support on $T_s$ and is isomorphic to $\ic _{Y,T_s}(-t+c+1)$, hence $\ic _{W,T_s}(-h) \simeq \ic _{Y,T_s}$ where $h=c-t+1$. This shows that $W$ and $Y_s$ are bilinked on $T_s$. Indeed by composing with the inclusion $\ic _{Y,T_s} \to \oc _{T_s}$ we get an injective morphism $\varphi :\ic _{W,T_s}(-h) \to \oc _{T_s}$, now take a curve $C \in H^0(\ic _W(k))$, without any irreducible component in common with $T_s$ and take $C'$ such that $\varphi (C)=C'$ in $H^0_*(\oc _{T_s})$, then $W$ is bilinked to $Y$ by $C\cap T_s$ and $C'\cap T_s$.

This can be seen in another way: take $C'\in H^0(\ic _Y(k))$ with no irreducible common component with $T_s$, the complete intersection $T_s \cap C'$ links $Y$ to a subscheme $Z$. By mapping cone:
$$0 \to F^*(h-k) \to \oc (-t+1) \oplus \oc (-k)\oplus \oc (-(k-h)) \stackrel{(T_s,C',C)}{\to} \ic _Z \to 0$$
Observe that $C$ and $T_s$ do not share any common component. Indeed if $T_s=AT'$ and $C=A\tilde C$, then $(C'\cap A)\subset Z$ (as schemes), because $Z$ is the schematic intersection of $T_s, C'$ and $C$. This is impossible because $C'\cap A$ contains points of $Y$ (otherwise $Y \subset T'$ but $h^0(\ic _Y(t-2))=0$). The complete intersection $T_s\cap C$ links $Z$ to a subscheme $W'$ and by mapping cone, we get that $W'$ is a section of $F(-h)$. By uniqueness it follows that $W'=W$. The same argument starting from $W$ and $T\in H^0(\ic _W(t-1))$, instead of $Y$ and $T_s$, works even better and shows that $W$ is bilinked on $T$ to a section $Y_s$ of $F$. In conclusion the curves $T_s$ are given by $s \wedge u$, where $u$ is the unique section of $F(-h)$ and where $s\in H^0(F)$ vanishes in codimension two.

(3) The exact sequence $0 \to \oc (c-t+1) \to F \to \ic _W(t-1) \to 0$ shows that $\ic _W(t-1)$ is globally generated, hence the general element in $H^0(\ic _W(t-1))$ is reduced.
\end{proof}

\par
Since $W$ could well be non reduced with embedding dimension two, concerning $T$, this is the best we can hope. However, \underline{and this is the point}, we may reverse the construction and start from $W$.

\begin{lemma}
\label{link W}
Let $W \subset \Ptw$ be a zero-dimensional, locally complete intersection (l.c.i.) subscheme. Assume $\ic _W(n)$ is globally generated, then if $T, T' \in H^0(\ic _W(n))$ are sufficiently general, the complete intersection $T \cap T'$ links $W$ to a smooth subscheme $Z$ such that $W \cap Z=\emptyset$.
\end{lemma}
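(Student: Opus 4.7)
The plan is to pass to the blow-up $\pi\colon X\to\Ptw$ of $W$. Since $W$ is locally a complete intersection in the smooth surface $\Ptw$, $X$ is smooth with exceptional divisor $E$ satisfying $E^2=-\deg W$ and $E\cdot\pi^*H=0$, where $H$ is the hyperplane class. Setting $L:=\oc_X(n\pi^*H-E)$, the projection formula together with $\pi_*\oc_X(-E)=\ic_W$ gives $H^0(X,L)\cong V:=H^0(\ic_W(n))$; under this identification a section $s\in V$ corresponds to the strict transform $\tilde T\in|L|$ of its divisor $T\subset\Ptw$. The hypothesis that $\ic_W(n)$ is globally generated on $\Ptw$ translates to $L$ being globally generated on $X$.

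Next I would apply Bertini twice on the smooth surface $X$. For general $T\in V$, the divisor $\tilde T\in|L|$ is a smooth curve; fixing such a $T$ and letting $T'$ vary, the second application gives that $\tilde T\cap\tilde T'$ is reduced, zero-dimensional, of degree $L^2=n^2-\deg W$. The extra ingredient is to force $\tilde T\cap\tilde T'\cap E=\emptyset$: for a generic $T$ the set $\tilde T\cap E$ is finite, and because $L$ is base-point-free at each of its points, the sections of $V$ vanishing at such a point form a proper linear subspace of $V$; a generic $T'$ then avoids this finite union of hyperplanes.

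Finally, $\pi$ restricts to an isomorphism $X\setminus E\to\Ptw\setminus W$, so $Z:=\pi(\tilde T\cap\tilde T')$ is smooth, zero-dimensional of degree $n^2-\deg W$, and disjoint from $W$. Combined with $\deg(T\cap T')=n^2=\deg W+\deg Z$, this forces the scheme equality $T\cap T'=W\sqcup Z$, whence $(I_T+I_{T'}):I_W=I_Z$ and $Z$ is the linkage residual of $W$, as required. The main obstacle I expect is the generic-position argument that $\tilde T\cap\tilde T'$ avoids $E$ (and, implicitly, that generic $T,T'$ share no common component — automatic here, since the base scheme of $|V|$ is the zero-dimensional $W$); granted that, everything else is Bertini plus degree arithmetic.
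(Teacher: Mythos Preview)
Your blow-up approach is different from the paper's and essentially works, but there is a real error: the blow-up $X=\mathrm{Bl}_W\Ptw$ need \emph{not} be smooth when $W$ is non-reduced, and the lemma must cover that case (in the paper's application $W$ is the zero locus of the unique section of $F(-c+t-1)$, which the author explicitly warns ``could well be non-reduced with embedding dimension two''). For instance, if $W$ is given locally by the ideal $(x^2,y)$ then one affine chart of $X$ is the quadric cone $\{ya=x^2\}$, singular at the origin; if $W$ is given locally by $(x^2,y^2)$ then $\mathrm{Sing}(X)$ is an entire curve inside $E$ and the general $\tilde T\in|L|$ actually acquires a node there. So the step ``$\tilde T$ is a smooth curve'' fails as written.

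Fortunately you never use smoothness of $\tilde T$ along $E$: your own avoidance argument already gives $\tilde T\cap\tilde T'\cap E=\emptyset$, and since $X\setminus E\cong\Ptw\setminus W$ is smooth, Bertini there makes $\tilde T\setminus E$ smooth and then $\tilde T\cap\tilde T'$ reduced. So reorder the logic to work on $X\setminus E$ and the proof survives. For comparison, the paper avoids the blow-up entirely: global generation of $\ic_W(n)$ means the evaluation $H^0(\ic_W(n))\otimes\oc_p\to\ic_{W,p}$ is surjective at each $p\in\mathrm{Supp}(W)$, so one can choose $T,T'$ whose germs generate the local ideal $\ic_{W,p}=(f,g)$, forcing $T\cap T'=W_p$ in a neighbourhood of $p$; this being an open condition on pencils in $H^0(\ic_W(n))$, it holds generically, and ordinary Bertini on $\Ptw\setminus W$ then makes the residual $Z$ smooth. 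The paper's route is more elementary (Nakayama plus openness), while yours hides the local analysis inside Bertini on $X$---at the price of handling the singularities of $X$.
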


\begin{proof} If $p \in Supp(W)$, denote by $W_p$ the subscheme of $W$ supported at $p$. Since $W$ is l.c.i, $\ic _{W,p}=(f,g) \subset \oc _p$. By assumption the map $H^0(\ic _W(n))\otimes \oc _p \stackrel{ev}{\to} \ic _{W,p}$ which takes $T \in H^0(\ic _W(n))$ to its germ, $T_p$, at $p$, is surjective. Hence there exists $T$ such that $T_p =f$ (resp. $T'$ such that $T'_p =g$). It follows that in a neighbourhood of $p$: $T \cap T' = W_p$. If $G$ is the Grassmannian of lines of $H^0(\ic _W(n))$ for $\lag T,T' \rag \in G$ the property $T\cap T' =W_p$ (in a neighbourhood of $p$) is open (it means that the local degree at $p$ of $T\cap T'$ is minimum). We conclude that there exists a dense open subset, $U_p \subset G$, such that for $\lag T,T' \rag \in U_p$, $T \cap T' = W_p$ (locally at $p$). If $Supp(W)=\{p_1,...,p_r\}$ there exists a dense open subset $U \subset U_1\cap ...\cap U_r$ such that if $\lag T,T' \rag \in U$, then $T\cap T'$ links $W$ to $Z$ and $W \cap Z = \emptyset$.
\par
By Bertini's theorem the general curve $T \in H^0(\ic _W(n))$ is smooth out of $W$. If $C \subset T$ is an irreducible component, the curves of $H^0(\ic _W(n))$ cut on $C$, residually to $W\cap C$, a base point free linear system. By the previous part the general member, $Z_C$, of this linear system doesn't meet $Sing(C)$ (because $Z_C \cap W = \emptyset$), it follows, by Bertini's theorem, that $Z_C$ is smooth. So for general $T,T' \in H^0(\ic _W(n))$, $T\cap T'$ links $W$ to a smooth subscheme, $Z$, such that $W \cap Z=\emptyset$.
\end{proof}

\begin{corollary}
\label{linkage WZ}
Let $y \in A_t$ be non-admissible. If $y$ is effective, with notations as in \ref{notation gap}, if $T,T' \in H^0(\ic _W(t-1))$ are sufficiently general, then $T \cap T'$ links $W$ to a smooth subscheme, $Z$, such that $W \cap Z = \emptyset$. Furthermore $\ic _Z(c)$ is globally generated and if $S_c \in H^0(\ic _Z(c))$ is sufficiently general, then $T \cap S_c$ links $Z$ to a smooth subscheme $Y$, where $Y$ is the zero locus of a section of $F$ and where $Z \cap Y=\emptyset$.
\end{corollary}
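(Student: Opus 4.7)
The first assertion is a direct application of Lemma \ref{link W} to $W$ at level $n = t-1$. The subscheme $W$ is the zero-scheme of the unique section $u$ of $F(-c+t-1)$ and vanishes in codimension two (cf.\ Notations \ref{notation gap}), hence is zero-dimensional and locally complete intersection. By Lemma \ref{T reduite}(3), $\ic_W(t-1)$ is globally generated, so the lemma furnishes, for general $T, T' \in H^0(\ic_W(t-1))$, a smooth $Z$ linked to $W$ by $T \cap T'$ with $W \cap Z = \emptyset$.

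To obtain the global generation of $\ic_Z(c)$, the plan is to combine two facts. Since $W \cap Z = \emptyset$, the Chinese Remainder Theorem yields the short exact sequence
$$0 \to \ic_{T \cap T'} \to \ic_Z \to \oc_W \to 0,$$
and the Koszul resolution of the complete intersection $T \cap T'$ gives $H^1(\ic_{T\cap T'}(c)) = 0$ (using $c \geq 2t$); so the map $H^0(\ic_Z(c)) \to H^0(\oc_W)$ is surjective, which handles base-point-freeness along $W$. Away from $T \cap T'$, the sections $T \cdot H^0(\oc(c-t+1))$ and $T' \cdot H^0(\oc(c-t+1))$ of $\ic_{T \cap T'}(c) \subset \ic_Z(c)$ suffice. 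At a point $p \in Z$, Bertini applied to the globally generated sheaf $\ic_W(t-1)$ (whose base locus sits in $W$) implies that $T$ and $T'$ are smooth at $p$; their local equations then generate $\ic_{Z,p}$ modulo $\mathfrak{m}_p$, and multiplication by suitable sections of $\oc(c-t+1)$ turns them into global sections of $\ic_Z(c)$ generating the stalk at $p$.

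For the third assertion, the plan is to apply Lemma \ref{link W} a second time, now to $Z$ with the globally generated sheaf $\ic_Z(c)$, fixing $T$ and letting $S_c$ be general in $H^0(\ic_Z(c))$. Because the base locus of $\ic_Z(c)$ is contained in $Z$ and $T$ is smooth on $Z$, the argument of Lemma \ref{link W} adapts without modification to produce a smooth $Y$ linked to $Z$ by $T \cap S_c$ with $Z \cap Y = \emptyset$. A general $S_c$ also avoids each point of the finite set $W$ (by base-point-freeness of $\ic_Z(c)$ at those points), so $W \cap Y = \emptyset$ as well, which guarantees that $T$ is smooth at $Y$.

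The main obstacle is the final identification: the $Y$ just produced must be shown to be the zero-scheme of a section of $F$. This is the symmetric reverse of the construction used at the end of the proof of Lemma \ref{T reduite}(2). Concretely, one lifts $T \in H^0(\ic_W(t-1))$ to a section $s_T \in H^0(F)$ via the surjection coming from $0 \to \oc(c-t+1) \to F \to \ic_W(t-1) \to 0$; its zero-scheme $Y_{s_T}$ is by construction bilinked to $W$ on $T$. A mapping cone argument applied to the two successive linkages $W \leadsto Z$ by $T \cap T'$ and $Z \leadsto Y$ by $T \cap S_c$ produces an exact sequence $0 \to \oc \to F' \to \ic_Y(c) \to 0$ for some rank-two bundle $F'$; the uniqueness of the minimal section of $F(-c+t-1)$ (which determines $W$) then forces $F' \simeq F$, whence $Y = Y_{s_T}$ for the appropriate $S_c$, as desired.
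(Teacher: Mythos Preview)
Your first three paragraphs are correct, though your route to the global generation of $\ic_Z(c)$ differs from the paper's. The paper applies the mapping cone to the sequence $0 \to \oc(c-2t+2) \to F(-t+1) \to \ic_W \to 0$ (coming from the unique section $u$) and the linkage $W \leadsto Z$ by $T \cap T'$, obtaining
\[
0 \to F^*(-t+1) \to \oc(-c) \oplus 2\,\oc(-t+1) \to \ic_Z \to 0 \qquad (*)
\]
from which global generation of $\ic_Z(c)$ is immediate, since the middle term twisted by $c$ is globally generated. Your Chinese-Remainder/Koszul/Bertini argument reaches the same conclusion by direct inspection at each point; this is fine, but it discards the structural information in $(*)$.

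That loss is exactly what causes the gap in your final paragraph. In the paper, the identification of $Y$ as a section of $F$ is a one-line consequence of applying the mapping cone to $(*)$ for the linkage $Z \leadsto Y$ by $T \cap S_c$: after cancelling the summand $\oc(-t+1)$ (corresponding to $T$, which is one of the two degree-$(t-1)$ generators in $(*)$) and one $\oc(-c)$ (a general $S_c$ has nonzero component in that summand), one gets $0 \to \oc(-c) \to F(-c) \to \ic_Y \to 0$, i.e.\ $Y$ is the zero locus of a section of $F$. The bundle $F$ appears \emph{directly}, because $F^*$ was already sitting in $(*)$. Your argument instead produces ``some rank-two bundle $F'$'' and then asserts that the uniqueness of the minimal section of $F(-c+t-1)$ forces $F' \simeq F$. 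This inference is not justified: even granting $h^0(\ic_Y(t-1))=1$ (which you have not checked for this particular $Y$), two locally free extensions of $\ic_Y(c)$ by $\oc$ correspond to possibly distinct classes in $\mathrm{Ext}^1(\ic_Y(c),\oc)$ and need not be isomorphic; uniqueness of the minimal section only says each such bundle has a well-defined $W$-scheme, not that these schemes coincide nor that the bundles do. The fix is simply to carry $(*)$ through from the start, after which no uniqueness argument is needed.
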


\begin{proof} The first statement follows from \ref{link W}. From the exact sequence $$0 \to \oc (c-2t+2) \to F(-t+1) \to \ic _W \to 0$$ we get by mapping cone: 
$$0 \to F^*(-t+1) \to \oc (-c) \oplus 2.\oc (-t+1) \to \ic _Z \to 0\,\,\,\,(*)$$
which shows that $\ic _Z(c)$ is globally generated. Since $Z$ is smooth and contained in the smooth locus of $T$ and since $\ic _Z(c)$ is globally generated, if $C$ is an irreducible component of $T$, the curves of $H^0(\ic _Z(c))$ cut on $C$, residually to $C\cap Z$, a base point free linear system. In particular the general member, $D$, of this linear system doesn't meet $Sing(C)$. By Bertini's theorem we may assume $D$ smooth. It follows that if $S_c \in H^0(\ic _Z(c))$ is sufficiently general, $S_c \cap T$ links $Z$ to a smooth $Y$ such that $Z \cap Y=\emptyset$. By mapping cone, we see from $(*)$ that $Y$ is the zero-locus of a section of $F$.
\end{proof}

The previous lemmas will allow us to apply the following (classical, I think) result:

\begin{lemma}
\label{CB residuel}
Let $Y, Z \subset \Ptw$ be two zero-dimensional subschemes linked by a complete intersection, $X$, of type $(a,b)$. Assume:
\begin{enumerate}
\item $Y \cap Z = \emptyset$
\item $\ic _Y(a)$ globally generated.
\end{enumerate}
Then $Z$ satisfies Cayley-Bacharach for $(b-3)$.
\end{lemma}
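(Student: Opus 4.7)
The plan is to reduce the statement to the classical Cayley--Bacharach property for the complete intersection $X$ itself, namely that $X$ satisfies CB$(a+b-3)$ (this follows from $\omega _X \simeq \oc _X(a+b-3)$, which makes $X$ fail by exactly one to impose independent conditions on curves of degree $a+b-3$).

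Concretely, fix a colength-one subscheme $Z' \subset Z$; by the Gorenstein property of zero-dimensional l.c.i.\ local rings, $Z'$ differs from $Z$ at a single point $p \in Supp(Z)$, where the local ideal of $Z'$ is the unique colength-one ideal in $\oc _p/\ic _{Z,p}$. Let $C$ be any plane curve of degree $b-3$ containing $Z'$; I want to show $Z \subset C$. Hypothesis (1) implies $p \notin Y$, so by hypothesis (2) the global generation of $\ic _Y(a)$ produces a section $A \in H^0(\ic _Y(a))$ with $A(p) \neq 0$. Then $A \cup C$ is a curve of degree $a+(b-3) = a+b-3$ containing $Y \sqcup Z'$. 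Because $Y \cap Z = \emptyset$, one has $X_p = Z_p$, so $Y \sqcup Z'$ is precisely the colength-one subscheme of $X$ at $p$. Applying CB$(a+b-3)$ to the complete intersection $X$ forces $X \subset A\cup C$; since $A$ is a unit at $p$, the local inclusion $Z_p \subset A \cup C$ reduces to $Z_p \subset C$, and combined with $Z \setminus Z_p \subset Z' \subset C$ this yields $Z \subset C$.

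No step looks genuinely hard: the finding of $A$ missing $p$ is immediate from global generation plus $p \notin Y$, and the union $A \cup C$ is designed precisely so that it realizes the target degree $a+b-3$ dictated by linkage. The only point deserving care is that the classical CB$(a+b-3)$ for $X$ must be applied scheme-theoretically at $p$ rather than just set-theoretically; this is standard (and is in fact what is needed in \ref{CB def}), so no new input is required.
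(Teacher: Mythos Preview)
Your proof is correct and follows essentially the same approach as the paper: take a degree $b-3$ curve through $Z'$, use global generation of $\ic_Y(a)$ to find a degree $a$ curve through $Y$ missing $p$, and apply $CB(a+b-3)$ for the complete intersection $X$ to the union. You are slightly more careful about the scheme-theoretic conclusion at $p$ (noting $A$ is a unit there), but the argument is the same.
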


\begin{proof} Notice that $Z$ and $Y$ are l.c.i. Now let $P$ be a curve of degree $b-3$ containing $Z' \subset Z$ of colength one. We have to show that $P$ contains $Z$. Since $\ic _Y(a)$ is globally generated and since $Y \cap Z =\emptyset$, there exists $F \in H^0(\ic _Y(a))$ not passing through $p$. Now $PF$ is a degree $a+b-3$ curve containing $X \setminus \{p\}$. Since complete intersections $(a,b)$ verify Cayley-Bacharach for $a+b-3$ (the bundle $\oc (a)\oplus \oc (b)$ exists!), $PF$ passes through $p$. This implies that $P$ contains $Z$.
\end{proof}

Gathering everything together:

\begin{corollary}
\label{Z CBt-4}
Let $y \in A_t$ be non-admissible. If $y$ is effective, then there exists a smooth zero-dimensional subscheme $Z$ such that:
\begin{enumerate}
\item $Z$ lies on a pencil $\lag T,T'\rag$ of curves of degree $t-1$, the base locus of this pencil is zero-dimensional.
\item $\deg Z =c(t-1)-y$
\item $Z$ satisfies Cayley-Bacharach for $t-4$
\end{enumerate}
\end{corollary}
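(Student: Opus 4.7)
The plan is essentially to collect the pieces produced in Lemmas \ref{Yt-1=1 si gap}--\ref{CB residuel} and check that everything lines up numerically. Assume $y\in A_t$ is non-admissible and effective, let $F$ be a globally generated rank two bundle with Chern classes $(c,y)$, and let $W$ be the second zero-scheme produced in \ref{notation gap}. By \ref{T reduite}(3) the sheaf $\ic_W(t-1)$ is globally generated, so Lemma \ref{link W} applies with $n=t-1$: for $T,T'\in H^0(\ic_W(t-1))$ sufficiently general, the complete intersection $T\cap T'$ is zero-dimensional, links $W$ to a smooth subscheme $Z$, and satisfies $W\cap Z=\emptyset$. This immediately gives conclusion (1): $Z$ lies on the pencil $\langle T,T'\rangle$ of curves of degree $t-1$, whose base locus $T\cap T'=W\sqcup Z$ is zero-dimensional.

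For (2) I would simply apply the linkage degree formula. The complete intersection $T\cap T'$ has degree $(t-1)^2$, and from the exact sequence of \ref{Yt-1=1 si gap} one reads $\deg W=y-(t-1)(c-t+1)$. Therefore
\[
\deg Z=(t-1)^2-\deg W=(t-1)^2+(t-1)(c-t+1)-y=(t-1)c-y,
\]
which is (2).

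For (3) I would invoke Corollary \ref{linkage WZ}: for a sufficiently general $S_c\in H^0(\ic_Z(c))$, the complete intersection $T\cap S_c$ links $Z$ to a smooth subscheme $Y$ with $Y\cap Z=\emptyset$, and $Y$ is the zero locus of a section of $F$. Since $F$ is globally generated, so is $\ic_Y(c)$. Now I apply Lemma \ref{CB residuel} to the pair $(Y,Z)$ linked by the complete intersection of type $(a,b)=(c,t-1)$: the hypotheses ``$Y\cap Z=\emptyset$'' and ``$\ic_Y(a)=\ic_Y(c)$ globally generated'' are both in place, so $Z$ satisfies Cayley--Bacharach for $b-3=t-4$, proving (3).

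The only delicate point is making sure the correct sheaf is taken to be globally generated when invoking \ref{CB residuel}: one must use the linkage of $Z$ to $Y$ (not to $W$) by the complete intersection of type $(c,t-1)$, since $\ic_W(c)$ need not be globally generated in any useful way, whereas $\ic_Y(c)$ is automatically so because $Y$ is a zero locus of a section of the globally generated bundle $F$. Once the roles of $W,Y,Z$ are kept straight, the assembly is purely formal.
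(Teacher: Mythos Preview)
Your proof is correct and follows essentially the same route as the paper: construct $Z$ via the linkage of $W$ in a general complete intersection of two curves in $H^0(\ic_W(t-1))$ (this is exactly \ref{linkage WZ}), then link $Z$ to a section $Y$ of $F$ by a complete intersection of type $(c,t-1)$ and apply \ref{CB residuel} with $(a,b)=(c,t-1)$. Your write-up is simply more explicit than the paper's, spelling out the degree count for (2) and the base-locus statement for (1) that the paper leaves implicit in its citation of \ref{linkage WZ}.
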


\begin{proof} By \ref{linkage WZ} there is a $Y$ zero-locus of a section of $F$ which is linked by a complete intersection of type $(c,t-1)$ to a $Z$ such that $Y \cap Z = \emptyset$. Since $\ic _Y(c)$ is globally generated, by \ref{CB residuel}, $Z$ satisfies Cayley-Bacharach for $t-4$.
\end{proof}

Now we conclude with:

\begin{proposition}
\label{Exist Gaps}
Let $Z \subset \Ptw$ be a smooth zero-dimensional subscheme contained in a curve of degree $d$. Let $a \geq 1$ be an integer such that $d \geq a^2+2$. Assume $h^0(\ic _Z(a-1))=0$. If $(a-1)d+1 \leq \deg Z \leq a(d-a)-1$, then $Z$ doesn't verify Cayley-Bacharach for $d-3$.
\end{proposition}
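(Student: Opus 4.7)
The plan is to argue by contradiction: assume $Z$ satisfies $CB(d-3)$, and extract from this a rank two vector bundle whose numerical invariants are then squeezed between two incompatible bounds. By Proposition \ref{CB}, the Cayley--Bacharach hypothesis yields $0 \to \oc \to F \to \ic_Z(d) \to 0$ with $c_1(F) = d$ and $c_2(F) = n := \deg Z$. The whole argument will turn on the integer $k_0 := \max\{k : h^0(F(-k)) \neq 0\}$, which is non-negative since $h^0(F) \geq 1$.

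First I would establish an upper bound $k_0 \leq a-1$. The hypothesis $h^0(\ic_Z(a-1)) = 0$ translates directly into $k_0 \leq d-a$. On the other hand, by maximality, a section of $F(-k_0)$ must vanish in codimension two -- otherwise one could factor out a divisor and contradict maximality -- so the zero-scheme has non-negative degree $k_0^2 - k_0 d + n \geq 0$. This places $k_0$ outside the open interval with endpoints $(d \pm \sqrt{d^2-4n})/2$. Combined with $n \leq a(d-a) - 1 < a(d-a)$, a short computation forces the right endpoint to exceed $d-a \geq k_0$, so $k_0$ sits on the left branch and one obtains $k_0 \leq (d - \sqrt{d^2 - 4n})/2 < a$.

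Next I would produce the opposite inequality $k_0 \geq a$ by a naive parameter count: $h^0(\ic_Z(d-a)) \geq \binom{d-a+2}{2} - n$. With $n \leq a(d-a) - 1$ it suffices to verify $\binom{d-a+2}{2} \geq a(d-a)$. Setting $m = d-a$, the hypothesis $d \geq a^2 + 2$ gives $m \geq a^2 - a + 2$, and checking that the quadratic $m^2 + (3-2a)m + 2$ is non-negative throughout this range is the only real calculation. This gives $h^0(F(-a)) = h^0(\ic_Z(d-a)) \geq 1$, hence $k_0 \geq a$, contradicting the upper bound.

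The main obstacle I expect is the calibration of $d \geq a^2 + 2$: it must be exactly sharp enough that the dimension $\binom{d-a+2}{2}$ of degree-$(d-a)$ plane curves beats the maximum length $a(d-a)-1$ of $Z$, while the upper bound from the rank-two-bundle dichotomy simultaneously forces $k_0$ strictly below $a$. Once these two estimates are harmonized, the rest is mechanical bookkeeping with $c_2(F(-k)) = k^2 - kd + n$ and the standard analysis of sections of an unstable rank-two bundle.
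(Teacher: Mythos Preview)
Your proof is correct and takes a genuinely different route from the paper's. The paper argues via the numerical character $(n_0,\dots,n_{\sigma-1})$ of $Z$: from $CB(d-3)$ it deduces $n_0 \geq d-1$, then bounds $n_{a-1}$ from above and below, and invokes the Ellia--Peskine result (Lemma~\ref{trou}) that a gap $n_{r-1} > n_r + 1$ in the character destroys Cayley--Bacharach for $n_r - 1$; a delicate case analysis on the connectedness of $(n_k,\dots,n_{a-1})$ then forces $\deg Z \geq a(d-a)$.

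Your argument bypasses the Hilbert-function machinery entirely. You go straight from $CB(d-3)$ to the bundle $F$ via Proposition~\ref{CB}, and reduce everything to squeezing the single integer $k_0$ between an upper bound coming from $c_2(F(-k_0)) \geq 0$ and a lower bound coming from a naive dimension count on $H^0(\ic_Z(d-a))$. This is shorter, uses only tools already present in the paper (essentially Proposition~\ref{CB} and the ``trick'' behind Lemma~\ref{trick}), and avoids the dependence on \cite{EP}. It is also slightly sharper: you never use the lower bound $(a-1)d+1 \leq \deg Z$ nor the hypothesis that $Z$ lies on a degree-$d$ curve (the latter is in fact a consequence of your estimate $h^0(\ic_Z(d-a)) > 0$). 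The paper's approach, on the other hand, stays closer to the classical plane-curve viewpoint and makes the role of the Hilbert function of $Z$ explicit, which is natural given the origin of the statement in \cite{GR}.
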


\begin{remark} This proposition is Theorem 3.1 in \cite{GR} with a slight modification: we make no assumption on the degree $d$ curve (which can be singular, even non reduced), but we assume $h^0(\ic _Z(a-1))=0$ (which follows from Bezout if the degree $d$ curve is integral).
\par
Since this proposition is a key point, and for convenience of the reader, we will prove it. We insist on the fact that the proof given is essentially the proof of Theorem 3.1 in \cite{GR}.
\end{remark}

\begin{notations}
We recall that if $Z \subset \Ptw$, the numerical character of $Z$, $\chi = (n_0,...,n_{\sG -1})$ is a sequence of integers which encodes the Hilbert function of $Z$ (see \cite{GP}):
\begin{enumerate}
\item $n_0 \geq ... \geq n_{\sG -1} \geq \sG$ where $\sG$ is the minimal degree of a curve containing $Z$
\item $h^1(\ic _Z(n)) = \displaystyle \sum _{i=0}^{\sG -1}[n_i-n-1]_+ - [i-n-1]_+$  ($[x]_+=max\{0,x\}$).
\item In particular $\deg Z = \displaystyle \sum _{i=0}^{\sG -1}(n_i-i)$.
\end{enumerate}
The numerical character is said to be \emph{connected} if $n_i \leq n_{i+1}+1$, for all $0 \leq i < \sG -1$.
For those more comfortable with the Hilbert function, $H(Z,-)$ and its first difference function, $\Delta (Z,i)=H(Z,i)-H(Z,i-1)$, we recall that $\Delta (i)=i+1$ for $i < \sG$ while $\Delta (i) = \# \{l \mid n_l \geq i+1\}$. It follows that the condition $n_{r-1}>n_r+1$ is equivalent to $\Delta (n_r+1)=\Delta (n_r)$. Also recall that for $0 \leq i < \sG$, $n_i=min\,\{t \geq i \mid \Delta (t) \leq i\}$.
\end{notations}

\begin{lemma}
\label{trou}
Let $Z \subset \Ptw$ be a smooth zero-dimensional subscheme. Let $\chi = (n_0,...,n_{\sG -1})$ be the numerical character of $Z$. If $n_{r-1}>n_r+1$, then $Z$ doesn't verify Cayley-Bacharach for every $i \geq n_r-1$.
\end{lemma}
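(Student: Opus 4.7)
The plan is to produce, for each $i \geq n_r - 1$, a point $p \in Z$ together with a curve of degree $i$ passing through $Z \setminus \{p\}$ but not $p$. I would begin with the straightforward reduction to the critical value $i = n_r - 1$: once a curve $C_0$ of degree $n_r - 1$ with the desired properties has been constructed, adjoining $i - (n_r - 1)$ generic lines chosen to miss $p$ yields a curve of degree $i$ with the same properties. The heart of the argument is therefore the case $i = n_r - 1$.

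For this case, I would translate into cohomology. Failure of $CB(n_r-1)$ at a point $p$ is equivalent, via the long exact sequence associated to $0 \to \ic_Z \to \ic_{Z \setminus \{p\}} \to k_p \to 0$, to the condition $h^1(\ic_{Z \setminus \{p\}}(n_r-1)) = h^1(\ic_Z(n_r-1))$, i.e.\ removing $p$ does not decrease $h^1$ at this level. The gap hypothesis $n_{r-1} > n_r + 1$ implies, via the cohomological formula recalled in the notations above, that $h^1(\ic_Z(n_r-1)) = \sum_{j=0}^{r-1}(n_j - n_r) \geq 2r$, which is strictly positive since each summand is at least $2$. Thus there is numerical room for such a $p$ to exist.

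To actually exhibit such a $p$, I would invoke the classical structural result for zero-dimensional subschemes of $\Ptw$ with non-connected character (in the spirit of \cite{GP}): a jump $n_{r-1} > n_r + 1$ forces a disjoint decomposition $Z = W \sqcup Z'$ into locally complete intersection subschemes, where $W$ lies on a plane curve $D$ of degree $r$, the character of $W$ being essentially the top block $(n_0, \ldots, n_{r-1})$ of $\chi$ and that of $Z'$ being a shifted version of the bottom block $(n_r, \ldots, n_{\sG-1})$. With this in hand, I would pick $p \in Z'$ and build the required curve of degree $n_r - 1$ as $D \cup D'$, where $D'$ has degree $n_r - 1 - r$ and passes through $Z' \setminus \{p\}$ while avoiding $p$; the existence of $D'$ is a short Hilbert-function computation for $Z'$, using that the minimal containing degree of $Z'$ is controlled by the shifted character.

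The main obstacle I anticipate is this structural decomposition step and the attendant numerics: one must verify that the chosen gap $r$ yields a nonempty residual $Z'$ (so $p$ can be selected there), and that the character of $Z'$ indeed guarantees the existence of $D'$ in degree exactly $n_r - 1 - r$ while generically avoiding $p$. This is especially delicate when $\chi$ has several gaps, since one must select the gap that makes the numerical matching work. Once that decomposition and degree count are established, the reduction to $i = n_r - 1$ and the generic avoidance of $p$ by $D'$ are comparatively routine.
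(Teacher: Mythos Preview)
Your approach is essentially the same as the paper's: reduce to $i=n_r-1$, invoke the splitting of $Z$ along a degree-$r$ curve into $E'$ (character $(n_0,\ldots,n_{r-1})$) and a residual $E''$ (character $(n_r-r,\ldots,n_{\sG-1}-r)$), pick $p\in E''$, and take the union of the degree-$r$ curve with a degree $n_r-1-r$ curve through $E''\setminus\{p\}$ missing $p$.

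Two remarks. First, the decomposition result you need is \cite{EP}, not \cite{GP}; and since $Z$ is smooth the residual is literally the set-theoretic complement, so disjointness is automatic. Second, your closing worries are unfounded: there is no ``gap selection'' because $r$ is given in the hypothesis, and the Hilbert-function check for $D'$ is immediate---the top entry of $\chi(E'')$ is $m_0=n_r-r$, so $h^1(\ic_{E''}(n_r-r-1))=0$ by the cohomological formula, which gives the separating curve $D'$ directly (not just generically). The cohomological paragraph translating $CB$-failure into an $h^1$ condition is correct but unnecessary for the argument.
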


\begin{proof} It is enough to show that $Z$ doesn't verify $CB(n_r-1)$. By \cite{EP} there exists a curve, $R$, of degree $r$ such that $R \cap Z =E'$ where $\chi (E')=(n_0,...,n_{r-1})$. Moreover if $E''$ is the residual of $Z$ with respect to the divisor $R$, $\chi (E'')=(m_0,...,m_{\sG -1-r})$, with $m_i=n_{r+i}-r$. It follows that $h^1(\ic _{E''}(n_r-r-1))=0$. This implies that given $X \subset E''$ of colength one, there exists a curve, $P$, of degree $n_r-r-1$ passing through $X$ but not containing $E''$. The curve $RP$ has degree $n_r-1$, passes through $Z':=E'\cup X$ but doesn't contain $Z$ (because $R\cap Z=E'$).
\end{proof}

\begin{proof}[Proof of Proposition \ref{Exist Gaps}]\quad \\ Observe that the assumptions imply $d \geq 3$, moreover if $d=3$, then $a = \deg Z =1$ and the statement is clear; so we may assume $d \geq 4$.
\par
Assume to the contrary that $Z$ satisfies $CB(d-3)$. This implies $h^1(\ic _Z(d-3))\neq 0$. If $a=1$, then $\deg Z \leq d-2$ and necessarily $h^1(\ic _Z(d-3))=0$, so we may assume $a \geq 2$. Now if $h^1(\ic _Z(d-3))\neq 0$, then $n_0 \geq d-1$, where $\chi (Z)=(n_0,...,n_{\sG -1})$ is the numerical character of $Z$. Since $\sG \geq a$, $n_{a-1} \in \chi (Z)$.
\par
We claim that $n_{a-1}<d-2$. Indeed otherwise $n_0 \geq d-1$ and $n_0 \geq ... \geq n_{a-1} \geq d-2$ implies 
$$\deg Z = \sum _{i=0}^{\sG -1} (n_i-i) \geq \sum _{i=0}^{a -1} (n_i-i) \geq 1+\sum _{i=0}^{a -1} (d-2-i)=1+a(d-2)-\frac{a(a-1)}{2}$$
If $a \geq 1$, then $1+a(d-2)-\frac{a(a-1)}{2} > a(d-a)-1 \geq \deg Z$: contradiction. 
\par
Let's show that $n_{a-1}\geq d-a$. Assume to the contrary $n_{a-1}<d-a$. Then there exists $k$, $1 \leq k \leq a-1$ such that $n_k \leq d-2$ and $n_{k-1}\geq d-1$ (indeed $n_0 \geq d-1$ and $n_{a-1}<d-a \leq d-2$). If $n_{k-1}\geq n_k\geq ... \geq n_{a-1}$ is connected, then $n_{k-1} < d-a+r$ where $a=k+r$. Hence $d-a+r > n_{k-1}\geq d-1$, which implies $r \geq a$ which is impossible since $k\geq 1$. It follows that there is a gap in $n_{k-1}\geq n_k\geq ... \geq n_{a-1}$, i.e. there exists $r$, $k \leq r \leq a-1$, such that $n_{r-1}>n_r+1$. Since $d-2 \geq n_k \geq n_r$, we conclude by \ref{trou} that $Z$ doesn't satisfy $CB(d-3)$: contradiction.  
\par
So far we have $d-a \leq n_{a-1}<d-2$ and $n_0 \geq d-1$. Set $n_{a-1}=d-a+r$ ($r \geq 0$). We claim that there exists $k$ such that $n_k \geq d-1$ and $n_k \geq ... \geq n_{a-1}=d-a+r$ is connected. Since $n_0\geq d-1$, this follows from \ref{trou}, otherwise $Z$ doesn't verify $CB(d-3)$.
\par
We have $\chi (Z)=(n_0,...,n_k,...,n_{a-1},...,n_{\sG -1})$ with $n_k \geq d-1$, $n_{a-1}=d-a+r$. Since $(n_k,..., n_{a-1})$ is connected and $n_k \geq d-1$, we have $n_i \geq d-1+k-i$ for $k \leq i \leq a-1$. Since $n_{a-1}=d-a+r \geq d-1+k-(a-1)$, we get $r \geq k$. It follows that:
$$\deg Z = \sum _{i=0}^{\sG -1}(n_i-i) = \sum _{i=0}^{k-1}(n_i-i)+\sum _{i=k}^{a-1}(n_i-i)+\sum _{i \geq a}(n_i-i)$$
$$\geq \sum _{i=0}^{k-1}(d-1-i)+\sum _{i=k}^{a-1}(d-1-2i+k)+\sum _{i \geq a}(n_i-i)$$
$$ \geq \sum _{i=0}^{k-1}(d-1-i)+\sum _{i=k}^{a-1}(d-1-2i+k) = (+)$$
We have:
$$\sum _{i=k}^{a-1}(d-1-2i+k) = (a-k)(d-a)\,\,\,\,(*)$$
If $k=0$, we get $\deg Z \geq a(d-a)$, a contradiction since $\deg Z \leq a(d-a)-1$ by assumption. Assume $k>0$. Then:
$$\sum _{i=0}^{k-1}(d-1-i) = k(d-1)-\frac{k(k-1)}{2} = k(d-1 -\frac{(k-1)}{2})$$
From $(+)$ and $(*)$ we get:
$$\deg Z \geq (a-k)(d-a)+k(d-1-\frac{(k-1)}{2}) = a(d-a)+k(a-1-\frac{(k-1)}{2})$$
and to conclude it is enough to check that $a-1 \geq (k-1)/2$. Since $r \geq k$, this will follow from $a-1 \geq (r-1)/2$. If $a < (r+1)/2$, then $n_{a-1} = d-a+r > d+a-1 \geq d$, in contradiction with $n_{a-1} < d-2$. The proof is over. 
\end{proof}

We can now conclude and get the ``gaps part'' of \ref{Thm2}:

\begin{corollary}
\label{GAPS t>4}
For $c \geq 4$ let $y \in A_t$ for some $t$, $2 \leq t \leq c/2$. If $y$ is non admissible, then $y$ is a gap (i.e. $(c,y)$ is not effective).
\end{corollary}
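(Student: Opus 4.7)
The plan is to reduce the non-effectiveness of non-admissible $y \in A_t$ to Proposition \ref{Exist Gaps}, applied to the auxiliary subscheme $Z$ furnished by Corollary \ref{Z CBt-4}. Assume for contradiction that $y$ is non-admissible and effective. By Lemma \ref{c(t-1)-y<0} we may write $y = (t-1)(c-a)+\alpha$ with $1 \leq a \leq t_0$ and $a^2+1 \leq \alpha \leq t-2$; in particular $t-1 \geq a^2+2$. Then $\deg Z = c(t-1)-y$ satisfies
$$(a-1)(t-1)+1 \;\leq\; \deg Z \;\leq\; a(t-1-a)-1,$$
which is exactly the degree range appearing in Proposition \ref{Exist Gaps} for $d = t-1$.

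The crux is verifying the remaining hypothesis $h^0(\ic_Z(a-1))=0$. This is the step I expect to be the main obstacle, because $Z$ may lie on singular or even non-reduced degree $t-1$ curves, so a naive Bezout argument could fail. The pencil structure produced by Corollary \ref{Z CBt-4} saves the day: $Z$ is contained in a pencil $\langle T, T'\rangle$ of curves of degree $t-1$ whose base locus is zero-dimensional, so $T$ and $T'$ share no irreducible component. Suppose $C$ is a curve of degree $a-1$ containing $Z$. If some member $T_\lambda$ of the pencil shares no component with $C$, then Bezout gives $\deg Z \leq (a-1)(t-1)$, contradicting $\deg Z \geq (a-1)(t-1)+1$. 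Hence every member of the pencil shares a component with $C$. Since $C$ has only finitely many irreducible components, by pigeonhole one component of $C$ is shared with infinitely many (equivalently all) members $T_\lambda$ of the $\mathbb{P}^1$-pencil, contradicting the zero-dimensionality of the base locus. The case $a=1$ is immediate since $\deg Z \geq 1$.

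Having verified all the hypotheses of Proposition \ref{Exist Gaps} with $d = t-1 \geq 3$, we conclude that $Z$ does not satisfy Cayley-Bacharach for $t-4$. But Corollary \ref{Z CBt-4}, item (3), asserts precisely the opposite. This contradiction shows that $y$ cannot be effective, completing the proof of the gaps half of Theorem \ref{Thm2}.
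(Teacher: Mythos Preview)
Your proof is correct and follows essentially the same route as the paper's: assume effectiveness, invoke Corollary \ref{Z CBt-4} to obtain $Z$ satisfying $CB(t-4)$ inside a pencil with zero-dimensional base locus, then apply Proposition \ref{Exist Gaps} with $d=t-1$ to derive a contradiction. The only difference is that you spell out the pigeonhole argument for $h^0(\ic_Z(a-1))=0$ (each component of $C$ lies in at most one pencil member), whereas the paper simply asserts that a general member of the pencil has no common component with $P$; these are the same observation.
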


\begin{proof} Since $y$ is non-admissible, $y \in G_t(a)$ for some $a \geq 1$ (see \ref{c(t-1)-y<0}),  or equivalently $\deg Z = c(t-1)-y \in [(a-1)(t-1)+1, a(t-1-a)-1]$ for some $a \geq 1$ such that $a^2+1\leq t-1$. In view of \ref{Z CBt-4} it is enough to show that $Z$ cannot verify Cayley-Bacharach for $t-4$. For this we want to apply \ref{Exist Gaps}. The only thing we have to show is $h^0(\ic _Z(a-1))=0$. Let $P$ be a curve of degree $\sG < a$ containing $Z$. If $P$ doesn't have a common component with some curve of $H^0(\ic _Z(t-1))$, then $\deg Z \leq \sG (t-1) \leq (a-1)(t-1)$. But this is impossible since $\deg Z \geq (a-1)(t-1)+1$. On the other hand $Z$ is contained in a pencil $\lag T,T' \rag$ of curves of degree $t-1$ and this pencil has a base locus of dimension zero (see \ref{Z CBt-4}). So we may always find a curve in $H^0(\ic _Z(t-1))$ having no common component with $P$.
\end{proof}

\section{The proof (existence).}

In this section we assume that $y \in A_t$ is admissible and prove that $y$ is indeed effective. Since $y$ is admissible we know by \cite{Co} that there exists a smooth plane curve, $T$, of degree $t-1$ and a globally generated line bundle, $\lc$, on $T$ of degree $z:=c(t-1)-y$.

\begin{lemma}
\label{L*(c) epsg}
Assume $y \in A_t$ is admissible. If $T$ is a smooth plane curve of degree $t-1$ and if $\lc$ is a globally generated line bundle on $T$ with $\deg \lc =c(t-1)-y$, then $\lc ^*(c)$ is non special and globally generated.
\end{lemma}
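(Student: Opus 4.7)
The plan is to reduce the whole statement to a single degree count. Since $T$ is a smooth plane curve of degree $d=t-1$, its genus is $g=\binom{t-2}{2}$ and its canonical bundle is $\omega_T=\oc_T(t-4)$. The line bundle $\mc:=\lc^*(c)$ has degree
$$\deg\mc = c(t-1)-\deg\lc = c(t-1)-(c(t-1)-y) = y.$$
So the whole proof should hinge on comparing $y$ with $2g=(t-2)(t-3)$.

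First I would establish the key estimate $\deg\mc=y\ge 2g$. Since $y\in A_t$, by definition $y>(t-1)(c-t+1)$. The standing hypothesis $t\le c/2$ gives $c-t+1\ge t+1$, so
$$y>(t-1)(t+1)=t^2-1.$$
Hence $y\ge t^2$, and an elementary comparison shows $t^2\ge (t-2)(t-3)+1$ as soon as $t\ge 2$ (the inequality $5t\ge 7$). Therefore $y\ge 2g$.

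With this in hand both assertions are immediate. For non-speciality, Serre duality gives
$$h^1(\mc)=h^0(\omega_T\otimes\mc^{-1})=h^0(\lc(t-4-c)),$$
and the degree of the latter line bundle is $(2g-2)-y\le -2<0$, forcing $h^0=0$. For global generation, pick any $p\in T$; then $\deg(\mc(-p))=y-1\ge 2g-1$, so $h^1(\mc(-p))=0$, and the cohomology sequence of $0\to \mc(-p)\to\mc\to\mc_p\to 0$ shows that the evaluation $H^0(\mc)\to\mc_p$ is onto. Since $p$ is arbitrary, $\mc$ is globally generated.

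I do not expect any genuine obstacle: the lemma is purely a numerical consequence of the definition of $A_t$ together with the inequality $c\ge 2t$. The only mild point worth checking is that the edge cases $t=2,3$ (where $T$ is a line or a conic and $g=0$) also fit the $\deg\mc\ge 2g$ pattern, which they trivially do since $y>0$.
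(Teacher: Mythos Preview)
Your proof is correct and follows essentially the same route as the paper's: compute $\deg\lc^*(c)=y$, use $y\in A_t$ together with $c\ge 2t$ to get $y\ge t^2\ge (t-2)(t-3)+1=2g_T+1$, and conclude. The only difference is cosmetic: the paper simply invokes the standard fact that a line bundle of degree $\ge 2g+1$ is non-special and globally generated, while you spell out that fact via Serre duality and the evaluation sequence at a point.
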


\begin{proof} We have $\deg \lc ^*(c)=y$. It is enough to check that $y \geq 2g_T+1 = (t-2)(t-3)+1$. We have $y \geq (t-1)(c-t+1)+1$. Since $c \geq 2t$ it follows that $y \geq (t-1)(t+1)+1 = t^2$.
\end{proof}

\begin{lemma}
\label{h1L>0}
Assume $y \in A_t$ is admissible. If there exists a smooth plane curve, $T$, of degree $t-1$, carrying a globally generated line bundle, $\lc$, with $\deg \lc = c(t-1)-y$ and with $h^1(\lc )\neq 0$, then $y$ is effective.
\end{lemma}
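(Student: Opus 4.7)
\emph{Plan of proof.} The strategy is a Serre construction: I will produce a reduced zero-dimensional $Y\subset T$ of degree $y$ such that $\ic_Y(c)$ is globally generated and $Y$ satisfies Cayley-Bacharach for $c-3$, and then invoke Proposition \ref{CB} to obtain the desired rank two vector bundle $F$.

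To construct $Y$, set $\mc:=\lc^{-1}(c)$, a line bundle on $T$ of degree $y$. By Lemma \ref{L*(c) epsg}, $\mc$ is non-special and globally generated. Take $Y=(s)_0$ for a general $s\in H^0(\mc)$; Bertini makes $Y$ reduced, and $\oc_T(Y)\simeq \mc$, equivalently $\oc_T(cH-Y)\simeq \lc$. The residual sequence of $Y\subset T\subset \Ptw$ twisted by $\oc(c)$ then reads
$$0\to \oc(c-t+1)\to \ic_Y(c)\to \lc \to 0.$$
Since $c\geq 2t$ gives $c-t+1>0$, both extremities are globally generated and $h^1(\oc(c-t+1))=0$, so $\ic_Y(c)$ is globally generated.

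The heart of the proof is the Cayley-Bacharach verification. Twisting the residual sequence by $\oc(-3)$, and doing the analogue for $Y_p:=Y\setminus\{p\}$ (for which $\ic_{Y_p,T}(c-3)\simeq \lc(-3)\otimes \oc_T(p)$), one obtains
$$h^0(\ic_{Y_p}(c-3))-h^0(\ic_Y(c-3))=h^0(\lc(-3)\otimes \oc_T(p))-h^0(\lc(-3)).$$
Thus $CB(c-3)$ holds at $p$ if and only if the inclusion $H^0(\lc(-3))\hookrightarrow H^0(\lc(-3)\otimes \oc_T(p))$ is an equality, which by Serre duality on $T$ (using $\omega_T\simeq \oc_T(t-4)$) amounts to saying that $p$ is \emph{not} a base-point of $\lc^{-1}(t-1)$. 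This is exactly where the hypothesis $h^1(\lc)\neq 0$ enters: Serre duality gives $h^0(\lc^{-1}(t-4))\neq 0$, and multiplying by any nonzero section of $\oc_T(3)$ produces a nonzero section of $\lc^{-1}(t-1)$, so the base locus $B:=\mathrm{Bs}(\lc^{-1}(t-1))$ is a proper, hence finite, subset of the irreducible curve $T$.

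Since $\mc$ is base-point free and $B$ is finite, a general $s\in H^0(\mc)$ yields $Y=(s)_0$ disjoint from $B$; such a $Y$ satisfies $CB(c-3)$. Applying Proposition \ref{CB} then produces $0\to \oc\to F\to \ic_Y(c)\to 0$ with $c_1(F)=c$ and $c_2(F)=y$, and global generation of $F$ follows from global generation of $\ic_Y(c)$ combined with $h^1(\oc)=0$. Hence $(c,y)$ is effective. The main technical point is the cohomological translation of $CB(c-3)$ into the base-point-free condition on $\lc^{-1}(t-1)$; once that is secured, the hypothesis $h^1(\lc)\neq 0$ is precisely tailored to guarantee it.
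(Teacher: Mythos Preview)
Your argument is correct, and the cohomological translation of $CB(c-3)$ into the base-point condition on $\lc^{-1}(t-1)$ is clean and accurate. The paper, however, proceeds differently: it fixes a section $Z$ of $\lc$, uses $h^1(\lc)=h^0(\lc^{-1}(t-4))\neq 0$ to produce a plane curve $R$ of degree $t-4$ through $Z$, and then chooses $Y$ disjoint from $X=T\cap R$. The verification of $CB(c-3)$ is done by linkage: one builds a degree $t-1$ curve $T'=R\cup C$ (with $C$ a suitable cubic) through $Z$ and disjoint from $Y$, and then for any degree $c-3$ curve $P$ through $Y\setminus\{p\}$, the product $T'P$ contains the complete intersection $T\cap F_c$ minus $p$, forcing $p\in P$ by Cayley--Bacharach for complete intersections.

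The two routes use the hypothesis $h^1(\lc)\neq 0$ in essentially the same way, but package the $CB$ verification differently. The paper's approach is more elementary and geometric, parallel to Lemma~\ref{CB residuel}, and avoids Serre duality. Your approach is more intrinsic: it pinpoints the exact obstruction to $CB(c-3)$ as the base locus of $\lc^{-1}(t-1)$, and in fact shows that the conclusion already holds under the weaker hypothesis $h^0(\lc^{-1}(t-1))\neq 0$ (which is all you use), rather than the stronger $h^0(\lc^{-1}(t-4))\neq 0$.
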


\begin{proof} Let $Z$ be a section of $\lc$. If $h^1(\lc )=h^0(\lc ^*(t-4)) \neq 0$, then $Z$ lies on a curve, $R$, of degree $t-4$. Set $X = T\cap R$. By \ref{L*(c) epsg} $\lc ^*(c)$ is globally generated, so we may find a $s \in H^0(\lc ^*(c))$ such that $(s)_0 \cap X=\emptyset$. Set $Y = (s)_0$. We have $\oc _T(c) \simeq \oc _T(Z+Y)$ and $Y \cap Z=\emptyset$. So $Y$ and $Z$ are linked by a complete intersection $I = F_c\cap T$. Let's prove that $Y$ satisfies $CB(c-3)$. First observe that there exists a degree $t-1$ curve, $T'$, containing $Z$ such that $T' \cap Y = \emptyset$: indeed since $Y \cap X=\emptyset$, we just take $T' =R\cup C$ where $C$ is a suitable cubic. Now let $p \in Y$ and let $P$ be a degree $c-3$ curve containing $Y'=Y\setminus \{p\}$. The curve $T'P$ contains $I\setminus \{p\}$ and has degree $c+t-4$. Since the complete intersection $I$ satisfies $CB(c+t-4)$ and since $T' \cap Y=\emptyset$, $p\in P$.
\par
It follows that we have: $0 \to \oc \to F \to \ic _Y(c) \to 0$ where $F$ is a rank two vector bundle with Chern classes $(c,y)$. Since $\ic _{Y,T}(c) \simeq \lc$ is globally generated, $\ic _Y(c)$ and therefore $F$ are globally generated.
\end{proof}

We need a lemma:

\begin{lemma}
\label{R gene}
For any integer $r$, $1 \leq r \leq h^0(\oc (t-1))-3$, there exists a smooth zero-dimensional subscheme, $R$, of degree $r$ such that $\ic _R(t-1)$ is globally generated with $h^0(\ic _R(t-1)) \geq 3$.
\end{lemma}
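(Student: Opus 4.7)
I plan to take $R$ to be a general set of $r$ distinct points in $\Ptw$ and to verify by a dimension count that it has the required properties. Write $N+1 = h^0(\oc(t-1))$, so the hypothesis reads $r \leq N-2$.

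The cohomological condition $h^0(\ic_R(t-1)) \geq 3$ is immediate: since $r \leq N+1$, a general $r$-tuple imposes $r$ independent conditions on $|\oc(t-1)|$, giving $h^0(\ic_R(t-1)) = N+1-r \geq 3$.

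The main task is to show that $\ic_R(t-1)$ is globally generated, which I verify pointwise. At a point $q \in \Ptw\setminus R$, failure means $q$ is a base point of $|\ic_R(t-1)|$; equivalently, through the $(t-1)$-Veronese $v:\Ptw\hookrightarrow\bP^N$, that $v(q)\in\langle v(R_1),\ldots,v(R_r)\rangle$. For fixed $q$, this is a codimension-$(N+1-r)$ determinantal condition on $R \in (\Ptw)^r$ (the rank inequality of the $(r+1)\times(N+1)$ matrix with rows $v(R_i),v(q)$). Hence the incidence $I = \{(R,q)\in(\Ptw)^r\times\Ptw : q\notin R,\;v(q)\in\langle v(R)\rangle\}$ has dimension at most $2(r+1)-(N+1-r) = 3r-N+1$. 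The hypothesis $r\leq N-2$ gives $3r-N+1<2r = \dim(\Ptw)^r$, so the projection $I\to(\Ptw)^r$ misses a dense open set, and a general $R$ has no extra base points. At a point $p\in R$, global generation is equivalent to surjectivity of the first-jet map $H^0(\ic_R(t-1))\to\mathfrak{m}_p/\mathfrak{m}_p^2\otimes\oc(t-1)_p$, which amounts to the length-$(r+2)$ scheme $R+2p$ (the double-point thickening of $R$ at $p$) imposing $r+2$ independent conditions on $|\oc(t-1)|$. Since $r+2\leq N$, an analogous parameter count identifies the bad locus as a proper closed subset of $(\Ptw)^r$.

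Combining both checks, a general $R$ satisfies all requirements. The main obstacle is verifying base-point-freeness off $R$: the bound $r\leq h^0(\oc(t-1))-3$ is precisely what makes $3r-N+1<2r$, so that the Veronese-geometric dimension count rules out spurious base points; one expects difficulty at the boundary $r = h^0(\oc(t-1))-2$, for which the count breaks and, indeed, a pencil of curves of degree $t-1$ has a complete-intersection base locus strictly larger than its $r$ assigned points as soon as $t\geq 4$.
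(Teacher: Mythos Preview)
Your dimension count has a genuine gap: the ``codimension-$(N+1-r)$ determinantal condition'' is only the \emph{expected} codimension of the rank-$\le r$ locus in the space of $(r+1)\times(N+1)$ matrices, and nothing guarantees that this codimension survives pullback along the Veronese embedding $(\Ptw)^{r+1}\hookrightarrow(\bP^N)^{r+1}$. In fact it does not. Take $t=3$ (so $N=5$) and $r=3$, the maximal value allowed by the hypothesis. Whenever $R_1,R_2,R_3,q$ lie on a common line $L$, every conic through $R_1,R_2,R_3$ contains $L$ and hence $q$, so the collinear locus sits inside $I$; this locus has dimension $2+4=6$, whereas your bound asserts $\dim I\le 3r-N+1=5$. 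Thus the inequality $\dim I<2r$ on which your argument rests is false. The conclusion of the lemma is still true here---the collinear component does not dominate $(\Ptw)^3$---but your argument does not establish it, and the same special-position phenomenon (too many points on a line or low-degree curve) recurs for larger $t$ and would have to be controlled component by component. The ``analogous parameter count'' you invoke at points $p\in R$ has the identical defect.

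The paper avoids this by a two-case argument that never appeals to an expected-dimension bound. If $r\le h^0(\oc(t-2))$, a general $R$ of maximal rank has $h^1(\ic_R(t-2))=0$, so $\ic_R$ is $(t-1)$-regular and Castelnuovo--Mumford gives global generation directly. In the remaining range $h^0(\oc(t-2))<r\le h^0(\oc(t-1))-3$ the paper instead \emph{constructs} $R$ from its minimal free resolution: for $R$ of maximal rank and minimally generated one has $0\to d\cdot\oc(-t-1)\to b\cdot\oc(-t)\oplus a\cdot\oc(-t+1)\to\ic_R\to 0$ with $a\ge 3$, and since $d=a+b-1\ge b+2$ a general map $d\cdot\oc\to b\cdot\oc(1)$ is surjective, forcing the degree-$(t-1)$ generators alone to generate $\ic_R$. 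This is the missing mechanism.
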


\begin{proof} Take $R$ of degree $r$, of maximal rank. If $h^0(\oc (t-2)) \geq r$, then $h^1(\ic _R(t-2))=0$ and we conclude by Castelnuovo-Mumford's lemma. Assume $h^0(\oc (t-2)) < r$ and take $R$ of maximal rank and minimally generated (i.e. all the maps $\sG (m):H^0(\ic _R(m)) \otimes H^0(\oc (1)) \to H^0(\ic _R(m+1))$ are of maximal rank). If $\sG (t-1)$ is surjective we are done, otherwise it is injective and the minimal free resolution looks like:
$$0 \to d.\oc (-t-1) \to b.\oc (-t) \oplus a.\oc (-t+1) \to \ic _R \to 0$$
By assumption $a \geq 3$. 
\par
Since $\mathcal{H}om(d-\oc (-t-1),b.\oc (-t)\oplus a.\oc (-t+1))$ is globally generated, if $\fG \in Hom(d.\oc (-t-1),b.\oc (-t)\oplus a.\oc (-t+1))$ is sufficiently general, then $Coker(u) \simeq \ic _R$ with $R$ smooth of codimension two. Furthermore since $b.\oc (1)$ is globally generated, it can be generated by $b+2$ sections; it follows that the general morphism $f: d.\oc \to b.\oc (1)$ is surjective ($d=a+b-1 \geq b+2$). In conclusion the general morphism $\fG = (f,g): d.\oc (-t-1) \to b.\oc (-t)\oplus a.\oc (-t+1)$ has $Coker(\fG )\simeq \ic _R$ with $R$ smooth, with the induced morphism $a.\oc (-t+1) \to \ic _R$ surjective.
\end{proof}  

\begin{proposition}
\label{Existence}
Let $c \geq 4$ be an integer. For every $2 \leq t \leq c/2$, every admissible $y \in A_t$ is effective.
\end{proposition}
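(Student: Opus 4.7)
The plan is to construct, for each admissible $y\in A_t$, a smooth zero-dimensional $Y\subset\Ptw$ of degree $y$ with $\ic_Y(c)$ globally generated and satisfying Cayley--Bacharach for $c-3$; Proposition \ref{CB} then produces the required rank-two globally generated bundle $F$ with Chern classes $(c,y)$. Setting $z:=c(t-1)-y$, admissibility of $y$ means $z\in LS(t-1)$, so Greco--Raciti--Coppens gives a smooth plane curve $T$ of degree $t-1$ carrying a globally generated line bundle $\lc$ with $\deg\lc=z$, and by Lemma \ref{L*(c) epsg} the twist $\mc:=\oc_T(c)\otimes\lc^{*}$ is non-special and globally generated of degree $y$. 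A general section of $\mc$ cuts a smooth $Y\subset T$ with $\ic_{Y,T}(c)\cong\lc$; the only thing to arrange is $CB(c-3)$ on $Y$, a condition strictly stronger than the $CB(t-4)$ that is automatic for $Y\subset T$ (since $\lc$ is globally generated on the smooth plane curve $T$ of degree $t-1$).

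I would split the argument according to whether some such $\lc$ admits $h^1(\lc)\neq 0$. If yes, Lemma \ref{h1L>0} applies verbatim: $h^0(\lc^{*}(t-4))=h^1(\lc)\neq 0$ produces a degree-$(t-4)$ curve $R$ through $Z\in|\lc|$, and $T':=R\cup C$ (for a cubic $C$ missing $Y$) is a degree-$(t-1)$ curve through $Z$ missing $Y$, so that Cayley--Bacharach on the complete intersection $T\cap F_c$ (which enjoys $CB(c+t-4)$) forces $CB(c-3)$ on $Y$. Otherwise, I would appeal to Lemma \ref{R gene} to produce an auxiliary smooth $R$ of degree $z$ with $\ic_R(t-1)$ globally generated and $h^0(\ic_R(t-1))\geq 3$; a smooth $T\in H^0(\ic_R(t-1))$ (Bertini) together with a general $F_c\in H^0(\ic_R(c))$ links $R$ via $T\cap F_c$ to a smooth $Y$ of degree $c(t-1)-z=y$ with $R\cap Y=\emptyset$. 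Lemma \ref{CB residuel} with $(a,b)=(t-1,c)$ then directly yields $CB(c-3)$ for $Y$, and global generation of $\ic_Y(c)$ follows from the sequence $0\to\oc(c-t+1)\to\ic_Y(c)\to\ic_{Y,T}(c)\to 0$ combined with $\ic_{Y,T}(c)\cong\oc_T(R)$ being globally generated on $T$.

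The hard part will be the degree restriction in Lemma \ref{R gene}: it provides $R$ only for $z\leq h^0(\oc(t-1))-3=\tfrac{t(t+1)}{2}-3$, whereas admissible $z$ can reach $(t-1)^2-1$, which for $t\geq 4$ exceeds that bound. For such ``boundary'' values of $z$ close to $(t-1)^2$ (i.e., $y$ near the lower end of $A_t$), a separate construction seems needed --- perhaps realizing $F$ as a small perturbation of a split bundle $\oc(a)\oplus\oc(b)$ with $a+b=c$, or via iterated linkage from a smaller auxiliary subscheme. One must also check that $\oc_T(R)$ is globally generated on the Bertini-chosen $T$, which is immediate when $R$ arises as a divisor of the GRC line bundle on this same $T$ but requires extra argument when $R$ is produced abstractly by Lemma \ref{R gene}. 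Reconciling these constructions across the entire admissible range is the principal technical challenge.
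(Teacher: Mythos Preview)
Your overall architecture matches the paper's: split according to $h^1(\lc)$, invoke Lemma~\ref{h1L>0} when $h^1(\lc)\neq 0$, and in the complementary case build $Y$ via an auxiliary subscheme furnished by Lemma~\ref{R gene}. The gap is that you apply Lemma~\ref{R gene} to the wrong subscheme. You ask for $R$ of degree $z=c(t-1)-y$ and then link by $(t-1,c)$; as you yourself note, $z$ can reach $(t-1)^2-1$, which overshoots the bound $h^0(\oc(t-1))-3$ of Lemma~\ref{R gene}, and even when $R$ exists there is no reason $\oc_T(R)$ should be globally generated on the Bertini curve $T$. Both obstructions are genuine, not merely technical, and the ad hoc patches you suggest (perturbations of split bundles, iterated linkage) do not obviously close the gap.

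The paper's fix resolves both issues at once by passing to the \emph{complementary} subscheme. Observe that $h^1(\lc)=0$ together with $h^0(\lc)\geq 2$ forces $z\geq g_T+1=\tfrac{(t-2)(t-3)}{2}+1$, so $r:=(t-1)^2-z$ satisfies $1\leq r\leq \tfrac{t(t+1)}{2}-3=h^0(\oc(t-1))-3$, exactly the range of Lemma~\ref{R gene}. Take then a general $R$ of degree $r$ with $\ic_R(t-1)$ globally generated and $h^0(\ic_R(t-1))\geq 3$, and link $R$ by a complete intersection $T\cap T'$ of two \emph{smooth} curves of degree $t-1$ to a set $Z$ of degree $z$. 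Now $\oc_T(Z)\cong\ic_{R,T}(t-1)$ is globally generated for free, since $\ic_R(t-1)$ is. Finally choose $Y\in|\lc^*(c)|$ on $T$ (globally generated by Lemma~\ref{L*(c) epsg}) missing the finite set $T\cap T'$; the second curve $T'$ through $Z$ avoids $Y$ entirely, and the $CB(c+t-4)$ of the complete intersection $T\cap F_c$ then forces $CB(c-3)$ on $Y$ exactly as in Lemma~\ref{h1L>0}. In short: link by $(t-1,t-1)$ rather than $(t-1,c)$, and apply Lemma~\ref{R gene} to the residual $R$ of degree $r=(t-1)^2-z$, not to a set of degree $z$.
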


\begin{proof} By \cite{Co} there exists a globally generated line bundle, $\lc$, of degree $l = c(t-1)-y$ on a smooth plane curve, $T$, of degree $t-1$. If $h^1(\lc )\neq 0$ we conclude with \ref{h1L>0}. Assume $h^1(\lc )=0$. Then $h^0(\lc )= l-g_T+1 \geq 2$ (we may assume $\lc \neq \oc _T$, because if $y =c(t-1)$, we are done). So $l \geq \frac{(t-2)(t-3)}{2}+1$. Since $(t-1)(c-t+1)+1 \leq y \leq t(c-t)-1$, we have:
$$(t-1)^2-1 \geq l \geq \frac{(t-2)(t-3)}{2}+1\,\,\,(*)$$
It follows that: 
$$l = (t-1)^2-r,\,\,\,1 \leq r \leq \frac{t(t+1)}{2}-3 = h^0(\oc (t-1))-3\,\,\,(**)$$
\par
For $r$, $1 \leq r \leq h^0(\oc (t-1))-3$, let $R \subset \Ptw$ be a general set of $r$ points of maximal rank, with $h^0(\ic _R(t-1))\geq 3$ and $\ic _R(t-1)$ globally generated (see \ref{R gene}). It follows that $R$ is linked by a complete intersection $T \cap T'$ of two smooth curves of degree $t-1$, to a set, $Z$, of $(t-1)^2-r =l$ points. Since $\ic _R(t-1)$ is globally generated, $\ic _{R,T}(t-1) \simeq \oc _T(t-1-R)$ is globally generated. Since $\oc _T(t-1) \simeq \oc _T(R+Z)$, we see that $\lc := \oc _T(Z)$ is globally generated. Moreover, by construction, $h^0(\ic _Z(t-1))\geq 2$. By \ref{L*(c) epsg}, $\lc ^*(c)$ is globally generated so there exists $s \in H^0(\lc ^*(c))$ such that: $Y:= (s)_0$ satsfies $Y \cap (T\cap T')=\emptyset$. As in the proof of \ref{h1L>0}, we see that $Y$ satisfies $CB(c-3)$: indeed $T'$ is a degree $t-1$ curve containing $Z$ such that $T'\cap Y=\emptyset$. Since $\ic _{Y,T}(c) \simeq \lc$ is globally generated, we conclude that $\ic _Y(c)$ is globally generated.
\end{proof}

\par
Proposition \ref{Existence} and Corollary \ref{GAPS t>4} (and Remark \ref{c<4})  prove Theorem \ref{Thm2}. It follows that the proof of Theorem \ref{Thm} is complete.

\section{Morphisms from $\Ptw$ to $G(1,3)$.} \label{S: G}

\par
It is well known that finite morphisms $\fG : \Ptw \to G(1,3)$ are in bijective correspondence with exact sequences of vector bundles on $\Ptw$:
$$0 \to E^* \to 4.\oc \to F \to 0 \,\,\,\,\,(*)$$
where $F$ has rank two and is globally generated with $c_1(F)=c>0$. If $\fG$ is generically injective, then $\fG (\Ptw) = S \subset G \subset \Pcq$ (the last inclusion is given by the Pl\H{u}cker embedding) has degree $c^2$ (as a surface of $\Pcq$) and bidegree $(y, c^2-y)$, $y = c_2(F)$ (i.e. there are $y$ lines of $S$ through a general point of $\Pt$ and $c^2-y$ lines of $S$ contained in a general plane of $\Pt$). Theorem \ref{Thm} gives all the possible $(c,y)$ (but it doesn't tell if $\fG$ exists). Finally, by \cite{T}, if $\fG$ is an embedding then $(c,y) \in \{(1,0), (1,1), (2,1), (2,3) \}$.


\end{document}